\begin{document}

\font\tenrm=cmr10
\font\ninerm=cmr9
\font\eightrm=cmr8
\font\sevenrm=cmr7
\font\ss=cmss10 
\font\smallcaps=cmcsc10 
%
\overfullrule=0pt

%
%

\newcommand{\cldot}[1]{{{\rm CLDOT}({#1})}}
\newcommand{\dorb}{{\rm dorbit}}
\newcommand{\orb}{{\rm orbit}}
\renewcommand{\gets}{\leftarrow}
\newcommand{\cl}{{\rm cl}}
\newcommand{\BISEQ}{{\rm BISEQ}}
\newcommand{\RED}{{\rm RED}}
\newcommand{\BLUE}{{\rm BLUE}}
\newcommand{\GREEN}{{\rm GREEN}}
\newcommand{\Tinv}{T^-}
\newcommand{\T}[1]{T^{(#1)}}

\newcommand{\card}[1]{\#(#1)}
\newcommand{\st}{\mathrel{:}}
\newcommand{\fhat}{{\hat{f}}}
\newcommand{\Ahat}{{\hat{A}}}
\newcommand{\Bhat}{{\hat{B}}}
\newcommand{\Chat}{{\hat{C}}}
\newcommand{\goes}{\rightarrow}
\newcommand{\IO}{\exists^\infty}
\renewcommand{\AE}{\forall^\infty}

\newcommand{\Tbar}{\overline{T}}
\newcommand{\sm}{{\rm small}}
\newcommand{\bi}{{\rm big}}
\newcommand{\ts}{t^{\sm}}
\newcommand{\tb}{t^{\bi}}
\newcommand{\tsj}{t^{\sm,j}}
\newcommand{\tbj}{t^{\bi,j}}
\newcommand{\Bs}{B^{\sm}}
\newcommand{\Bb}{B^{\bi}}

\newcommand{\Abar}{\overline{A}}
\newcommand{\bit}{\{0,1\}}
\newcommand{\bits}[1]{\{0,1\}^{#1}}
\newcommand{\es}{\emptyset}
\newcommand{\CL}{\mathord{\mbox{\it CL}}}
\newcommand{\COL}{\chi}
\newcommand{\alphap}{{\alpha'}}
\newcommand{\lam}{\lambda}
\newcommand{\betap}{{\beta'}}
\newcommand{\aprime}{a'}

\newcommand{\linem}{line$^-$\ }
\newcommand{\linemns}{line$^-$}
\newcommand{\linems}{lines$^-$\ }
\newcommand{\linemsns}{lines$^-$}
\newcommand{\newa}{a}
\newcommand{\olda}{a'}
\newcommand{\olddd}{d'}
\newcommand{\oldd}[1]{d_{#1}'}
\newcommand{\newdd}{d_1}
\newcommand{\newd}[1]{d_{#1}}
\newcommand{\vdw}{{\rm VDW }}
\newcommand{\bvdw}{{\bf VDW }}
\newcommand{\bvdwns}{{\bf VDW}}
\newcommand{\vdwns}{{\rm VDW}}
\newcommand{\polyvdw}{{\rm POLYVDW }}
\newcommand{\bpolyvdw}{{\bf POLYVDW }}
\newcommand{\bpolyvdwns}{{\bf POLYVDW}}
\newcommand{\polyvdwns}{{\rm POLYVDW}}
\newcommand{\hj}{{\rm HJ }}
\newcommand{\bhj}{{\bf HJ }}
\newcommand{\bhjns}{{\bf HJ}}
\newcommand{\hjns}{{\rm HJ}}
\newcommand{\polyhj}{{\rm POLYHJ }}
\newcommand{\bpolyhj}{{\bf POLYHJ }}
\newcommand{\bpolyhjns}{{\bf POLYHJ}}
\newcommand{\polyhjns}{{\rm POLYHJ}}

%

\newcommand{\Z}{\mathbb{Z}}
\newcommand{\Zpd}{\mathbb{Z}_p^d}
\newcommand{\Zp}{\mathbb{Z}_p}
\newcommand{\Zn}{\mathbb{Z}_n}
\newcommand{\N}{\mathbb{N}}
\newcommand{\Q}{\mathbb{Q}}
\newcommand{\R}{\mathbb{R}}
\newcommand{\C}{\mathbb{C}}

\newcommand{\lf}{\left\lfloor}
\newcommand{\rf}{\right\rfloor}
\newcommand{\lc}{\left\lceil}
\newcommand{\rc}{\right\rceil}
\newcommand{\Ceil}[1]{\left\lceil {#1}\right\rceil}
\newcommand{\ceil}[1]{\left\lceil {#1}\right\rceil}
\newcommand{\floor}[1]{\left\lfloor{#1}\right\rfloor}

\newcommand{\nth}{n^{th}}
%
%
\newcommand{\into}{\rightarrow}
\newcommand{\inter}{\cap}
\newcommand{\union}{\cup}
\newcommand{\sig}[1]{\sigma_{#1} }
\newcommand{\s}[1]{\s_{#1}}
\newcommand{\LMA}{{\rm L}(M^A)}
\newcommand{\Ah}{{\hat A}}
\newcommand{\monus}{\;\raise.5ex\hbox{{${\buildrel
   \ldotp\over{\hbox to 6pt{\hrulefill}}}$}}\;}
\newcommand{\dash}{\hbox{-}}
\newcommand{\infinity}{\infty}
\newcommand{\ie}{\hbox{ i.e.  }}
\newcommand{\eg}{\hbox{ e.g.  }}
\newcommand{\divides}{|}
\newcommand{\notdivides}{\mathrel{\not |}}
\newcommand{\eps}{\varepsilon}
\newcommand{\vect}[1]{{\bf #1}}

%
%
%
%
%
%
\newcounter{savenumi}
\newenvironment{savenumerate}{\begin{enumerate}
\setcounter{enumi}{\value{savenumi}}}{\end{enumerate}
\setcounter{savenumi}{\value{enumi}}}
\newtheorem{theoremfoo}{Theorem}[section] 
\newenvironment{theorem}{\pagebreak[1]\begin{theoremfoo}}{\end{theoremfoo}}
\newenvironment{repeatedtheorem}[1]{\vskip 6pt
\noindent
{\bf Theorem #1}\ \em
}{}

\newtheorem{lemmafoo}[theoremfoo]{Lemma}
\newenvironment{lemma}{\pagebreak[1]\begin{lemmafoo}}{\end{lemmafoo}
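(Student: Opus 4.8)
\medskip
\noindent\textbf{Proof proposal.}
The plan is to prove the statement by an outer induction on the complexity of the polynomial family, with an inner \emph{color-focusing} argument, in exact parallel with the combinatorial proofs of the Hales--Jewett and van der Waerden theorems. First I would fix a complexity measure for a finite admissible family $\mathcal{P}$ of polynomial maps: the lexicographically ordered pair consisting of $\deg\mathcal{P}$ and the number of members of $\mathcal{P}$ of that top degree (the usual PET weight). The base case $\deg\mathcal{P}\le 1$ is ordinary (multidimensional) Hales--Jewett applied in an appropriate product alphabet; I would either invoke $\hjns$ as a known theorem or reprove it by the standard color-focusing induction on the alphabet size.

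For the inductive step I assume the statement for all families of strictly smaller complexity. Given $\mathcal{P}$ and $r$ colors, the mechanism is to build an $(r+1)$-level tower of nested foci. The key move is to pass to a \emph{derived} family $\mathcal{P}'$: replace one top-degree $p\in\mathcal{P}$ by the finite set of lower-degree differences $q_T(S)=p(S\cup T)-p(S)$ as $T$ ranges over admissible increments, and adjoin whatever auxiliary polynomials the focusing step will need so that $\mathcal{P}'$ is closed under those operations; a direct calculation then shows $\mathcal{P}'$ is strictly lighter. Applying the inductive hypothesis over $\mathcal{P}'$ to a coloring of a large outer parameter space yields many monochromatic $\mathcal{P}'$-configurations that are mutually \emph{aligned} --- they agree except on one active block. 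Translating one such configuration by an admissible $T$ is a ``big'' move $\tb$; incrementing its active block is a ``small'' move $\ts$. Iterating the alignment/translation step $r$ times and applying the pigeonhole on colors forces two of the nested foci to receive the same color, and the combinatorial-line completion between them produces a monochromatic full $\mathcal{P}$-configuration $\{B\}\cup\{\,B\cup p(S)\st p\in\mathcal{P}\,\}$.

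The quantitative side is bookkeeping: each of the $r$ rounds consumes a color and inflates the ambient size by a Hales--Jewett-type amount, so the final bound is a tower that grows with the whole PET induction, and a routine compactness argument converts ``sufficiently large finite'' into the clean finitary statement; if an $\IO$/$\AE$ form is wanted it follows immediately. The step I expect to be the genuine obstacle is verifying that the derived family $\mathcal{P}'$ really is admissible and strictly simpler: one must check that each difference $q_T(S)=p(S\cup T)-p(S)$ is again a legitimate polynomial map with vanishing constant term and \emph{strictly smaller} degree, that adjoining the focusing-auxiliary polynomials raises neither the degree nor the top-degree count, and that the active block of a $\mathcal{P}$-configuration can be carved into the ``freeze everything else'' piece demanded by focusing without wrecking the configuration property. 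This is precisely where the $\ts/\tb$ dichotomy and the maps $\T{i}$, $\Tinv$ of the statement earn their keep, and it is the one place where the exact definition of ``admissible polynomial configuration'' is used essentially.
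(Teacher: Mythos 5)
There is a fundamental mismatch here: your proposal does not address any statement in this paper. What you have sketched is the PET/color-focusing proof of a polynomial Hales--Jewett-type theorem --- induction on a weight attached to a finite family of polynomial maps, passage to a derived family of differences $p(S\cup T)-p(S)$, nested foci, and a final pigeonhole on colors. No lemma in this paper concerns polynomial configurations, degrees, or combinatorial lines. The lemmas here are: Rado's characterization via the columns condition (quoted, not reproved), the fact that coefficients of a graph-regular equation sum to zero (proved with the explicit edge-colorings $g_p$ built from $f_{p-1}$), the reduction showing that monochromatic solutions under $\varphi_p$ for all $p$ force the weak graph columns condition (proved by building the auxiliary matrices $C(\sigma)$ on index set $\binom{[n]}{2}$ and invoking Rado), the existence of a proper monochromatic $Grid_n$ in any $r$-coloring of $[Q]^2$ (obtained by modifying Theorem 3.1 of the cited ``additive version of Ramsey's theorem''), and the purely linear-algebraic verification that the strong graph columns condition plants a solution $\vect{w}=\sum_t \vect{v}_t$ inside any such grid.

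Even reading your proposal as generously as possible --- as aimed at the one Ramsey-type existence statement, Lemma~\ref{le:monogrid} --- it does not engage with that lemma's actual content: you never work with the two-coordinate structure of $[Q]\times[Q]$, never produce the parameters $\vect{x},\vect{y},\vect{b},\vect{d}$ with the divisibility and size constraints $d(k)\mid d(k+1)$ and $x(k)b(k),y(k)b(k)\le c(k-1)$, and never address properness (distinctness of representations and separation of the two coordinates), which is exactly what the later lemma about solving $A\vect{w}=\vect{0}$ inside the grid consumes. The degree-lowering induction you describe has no role, since everything in this paper is linear; the difficulty is instead in the bookkeeping that lets $b(k)$ depend on bounds for $x(k+1),y(k+1),c(k+1)$ and $d(k+1)/d(k)$, so that the tail contributions $\sum_{t>t^*}v_t(i)$ stay within level $t^*$ of the grid. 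You would need to discard the polynomial machinery entirely and either adapt the grid construction from the cited paper or give a direct product-Ramsey argument producing the hierarchical grid with those quantitative dependencies.
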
}
\newtheorem{conjecturefoo}[theoremfoo]{Conjecture}
\newtheorem{research}[theoremfoo]{Line of Research}
\newenvironment{conjecture}{\pagebreak[1]\begin{conjecturefoo}}{\end{conjecturefoo}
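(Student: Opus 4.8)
The plan is to prove the claimed existence statement — that every finite coloring of the ambient structure contains the asserted monochromatic configuration — by induction on the alphabet size $t$ (equivalently, on the length of the progressions, if it is the van der Waerden form that is at issue), keeping the number of colors $c$ universally quantified at each stage so that the hypothesis is usable with an arbitrarily inflated palette. The base case $t=1$ is immediate, and for the inductive step I would assume a bound on $\hjns(t,c')$ for every $c'$ and derive one for $\hjns(t+1,c)$ by a color‑focusing argument run as a secondary induction.

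Concretely I would work in $[t+1]^{n}$ with $n=n_1+\cdots+n_M$ split into consecutive blocks whose number $M$ grows only with $c$ and whose lengths are fixed by a backward recursion: $n_M$ large enough to invoke $\hjns(t,c)$, then $n_{M-1}$ large enough to invoke $\hjns(t,c^{K})$ where $K$ counts the colorings the later blocks could induce, and so on down. The hypothesis applied inside one block — with the super‑coloring that records, for each string in that block, the color its choices force on the remaining blocks — yields a monochromatic combinatorial line over the \emph{reduced} alphabet $[t]$; reread over the full alphabet $[t+1]$, this is a combinatorial $[t+1]$‑line with its top point (all wildcard positions set to $t+1$) deleted. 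Merging these block by block, with the wildcard symbols synchronised across blocks by the product construction, one assembles a growing family of such truncated lines that are \emph{focused}: they share one common top point $f$, each is monochromatic on its $t$ retained points, and their colors stay pairwise distinct. Once the family has enough members, the color of $f$ must coincide with the color of one of the truncated lines, and adjoining $f$ to that line produces a genuine monochromatic combinatorial $[t+1]$‑line, which closes the induction. The $\vdwns$ form then follows by evaluating a monochromatic $[t]$‑line at the diagonal word $(0,1,\dots,t-1)$.

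The step I expect to be the real obstacle is making the focusing go through: arranging the backward recursion on the $n_i$ and the accompanying product colorings so that (i) the truncated lines coming from different blocks genuinely share a single top point $f$ after the synchronised product, and (ii) the pairwise distinctness of their colors is actually maintained each time the family grows — in particular ruling out the case where a newly created focused line repeats a color already present, which is precisely where one must either extract the full monochromatic line early or restructure the secondary induction. Everything else is parameter bookkeeping; and should the paper proceed to the polynomial refinements hinted at by $\polyvdwns$ and $\polyhjns$, the same focusing skeleton — truncated lines replaced by the relevant polynomial patterns, the line‑wildcard replaced by a dilation/shift parameter — is the template to lift.
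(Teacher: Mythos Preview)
There is no mathematical statement here to prove. What has been extracted as the ``statement'' is a fragment of the paper's preamble --- specifically, part of the \verb|\newenvironment{conjecture}| definition --- not a theorem, lemma, or conjecture. The paper defines a \texttt{conjecture} environment but never uses it; there are no conjectures in the body of the paper at all.

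Your proposal sketches a color-focusing proof of the Hales--Jewett theorem (and hence van der Waerden), apparently cued by the macros \verb|\vdwns|, \verb|\hjns|, \verb|\polyvdwns|, \verb|\polyhjns| in the preamble. Those macros are vestigial: none of them appears in the paper proper. The paper is about \emph{graph-regular} linear equations --- a Rado-type characterization for edge-colorings of $K_{\N}$ --- and its main results (Theorem~\ref{th:goal}, Lemmas~\ref{le:sumtozero}, \ref{le:phip}, \ref{le:monogrid}, \ref{le:solningrid}) are proved by constructing explicit edge-colorings based on $p$-adic data to obstruct solutions, reducing to Rado's columns condition via an auxiliary matrix $C(\sigma)$, and on the positive side embedding solutions into a monochromatic hierarchical grid. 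None of this has anything to do with Hales--Jewett focusing.

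So the gap is total: you have written a proof of the wrong theorem for a statement that does not exist. Before attempting a proof, verify that the extracted statement is actually a mathematical assertion and locate it in the paper's body rather than its preamble.
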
}

\newtheorem{conventionfoo}[theoremfoo]{Convention}
\newenvironment{convention}{\pagebreak[1]\begin{conventionfoo}\rm}{\end{conventionfoo}}

\newtheorem{porismfoo}[theoremfoo]{Porism}
\newenvironment{porism}{\pagebreak[1]\begin{porismfoo}\rm}{\end{porismfoo}}

\newtheorem{gamefoo}[theoremfoo]{Game}
\newenvironment{game}{\pagebreak[1]\begin{gamefoo}\rm}{\end{gamefoo}}

\newtheorem{corollaryfoo}[theoremfoo]{Corollary}
\newenvironment{corollary}{\pagebreak[1]\begin{corollaryfoo}}{\end{corollaryfoo}}

\newtheorem{openfoo}[theoremfoo]{Open problem}
\newenvironment{open}{\pagebreak[1]\begin{openfoo}\rm}{\end{openfoo}}

\newtheorem{exercisefoo}{Exercise}
\newenvironment{exercise}{\pagebreak[1]\begin{exercisefoo}\rm}{\end{exercisefoo}}

\newcommand{\fig}[1] 
{
\begin{figure}
\begin{center}
\input{#1}
\end{center}
\end{figure}
}

\newtheorem{potanafoo}[theoremfoo]{Potential Analogue}
\newenvironment{potana}{\pagebreak[1]\begin{potanafoo}\rm}{\end{potanafoo}}

\newtheorem{notefoo}[theoremfoo]{Note}
\newenvironment{note}{\pagebreak[1]\begin{notefoo}\rm}{\end{notefoo}}

\newtheorem{notabenefoo}[theoremfoo]{Nota Bene}
\newenvironment{notabene}{\pagebreak[1]\begin{notabenefoo}\rm}{\end{notabenefoo}}

\newtheorem{nttn}[theoremfoo]{Notation}
\newenvironment{notation}{\pagebreak[1]\begin{nttn}\rm}{\end{nttn}}

\newtheorem{empttn}[theoremfoo]{Empirical Note}
\newenvironment{emp}{\pagebreak[1]\begin{empttn}\rm}{\end{empttn}}

\newtheorem{examfoo}[theoremfoo]{Example}
\newenvironment{example}{\pagebreak[1]\begin{examfoo}\rm}{\end{examfoo}}

\newtheorem{dfntn}[theoremfoo]{Def}
\newenvironment{definition}{\pagebreak[1]\begin{dfntn}\rm}{\end{dfntn}}

\newtheorem{propositionfoo}[theoremfoo]{Proposition}
\newenvironment{proposition}{\pagebreak[1]\begin{propositionfoo}}{\end{propositionfoo}}
\newenvironment{prop}{\pagebreak[1]\begin{propositionfoo}}{\end{propositionfoo}}

\newenvironment{proof}
   {\pagebreak[1]{\narrower\noindent {\bf Proof:\quad\nopagebreak}}}{\QED}
\newenvironment{proofof}[1]
   {\pagebreak[1]{\narrower\noindent {\bf Proof of #1:\quad\nopagebreak}}}{\QED}
\newenvironment{sketch}
   {\pagebreak[1]{\narrower\noindent {\bf Proof sketch:\quad\nopagebreak}}}{\QED}

\newcommand{\yyskip}{\penalty-50\vskip 5pt plus 3pt minus 2pt}
\newcommand{\blackslug}{\hbox{\hskip 1pt
       \vrule width 8pt height 8pt depth 1.5pt\hskip 1pt}}
\newcommand{\QED}{{\penalty10000\parindent 0pt\penalty10000
       \hskip 8 pt\nolinebreak$\square$\hfill\lower 8.5pt\null}
       \par\yyskip\pagebreak[1]}

\newcommand{\BBB}{{\penalty10000\parindent 0pt\penalty10000
       \hskip 8 pt\nolinebreak\hbox{\ }\hfill\lower 8.5pt\null}
       \par\yyskip\pagebreak[1]}
    
\newcommand{\PYI}{CCR-8958528}
\newtheorem{factfoo}[theoremfoo]{Fact}
\newenvironment{fact}{\pagebreak[1]\begin{factfoo}}{\end{factfoo}}
\newenvironment{acknowledgments}{\par\vskip 6pt\footnotesize Acknowledgments.}{\par}




\newenvironment{construction}{\bigbreak\begin{block}}{\end{block}
   \bigbreak}

\newenvironment{block}{\begin{list}{\hbox{}}{\leftmargin 1em
   \itemindent -1em \topsep 0pt \itemsep 0pt \partopsep 0pt}}{\end{list}}


\dimen15=0.75em
\dimen16=0.75em

\newcommand{\lblocklabel}[1]{\rlap{#1}\hss}

\newenvironment{lblock}{\begin{list}{}{\advance\dimen15 by \dimen16
   \leftmargin \dimen15
   \itemindent -1em
   \topsep 0pt
   \labelwidth 0pt
   \labelsep \leftmargin
   \itemsep 0pt
   \let\makelabel\lblocklabel
   \partopsep 0pt}}{\end{list}}


\newenvironment{lconstruction}[2]{\dimen15=#1 \dimen16=#2
 \bigbreak\begin{block}}{\end{block}\bigbreak}

\newcommand{\Comment}[1]{{\sl ($\ast$\  #1\ $\ast$)}}


\newcommand{\Erdos}{Erd\H{o}s}
\newcommand{\Szekely}{Sz\'ekely}
\newcommand{\Szemeredi}{Szemer{\'e}di}
\newcommand{\Holder}{\H\"older}

\title{Toward a graph version of Rado's theorem}
\author{Andy Parrish}
\maketitle

\begin{abstract}
\noindent
An equation is called {\it graph-regular} if it always has
monochromatic solutions under edge-colorings of $K_{\N}$.
We present two Rado-like conditions which are respectively
necessary and sufficient for an equation to be graph-regular.
\end{abstract}

\vspace{1pc} \noindent {\bf Keywords:} coloring; graph theory; Ramsey theory; Rado's theorem

\section{Introduction}

Ramsey's theorem \cite{GRS}\cite{ramsey} states that, given numbers $r$ and $k$,
there is an $R=R(r,k)$ so that any $r$-coloring of the edges of the
complete graph on $R$ vertices contains a monochromatic complete graph
on $k$ vertices.

Elsewhere in Ramsey theory, the related (but seemingly-unconnected)
result of Rado \cite{GKP}\cite{GRS}\cite{rado} characterizes the set
of linear equations $A \vect{x} = \vect{0}$ such that, whenever $\N$
is finitely colored, there is a solution whose entries fall into
the same color class. Such an equation is called {\it partition-regular}.

These two results were connected in a result by Deuber, Gunderson, Hindman,
and Strauss \cite{gunder1}, and followed up by Gunderson, Leader, Pr\"omel,
and R\"odl \cite{gunder2}\cite{gunder3}.  Their results describe the equations
$A \vect{x} = \vect{0}$ such that, for any $m$, a large two-colored graph must
either contain a complete blue subgraph whose vertices solve the equation,
or a red $K_m$ with no implied structure. If one can assure that there is
no red $K_m$, then the desired structured set falls out nicely.

The first example of an equation with an unconditional monochromatic solution
was given in \cite{parrish}. That paper introduced the notion of a
{\it graph-regular} equation.

\begin{definition}
We say $A \vect{x} = \vect{0}$ is graph-regular if there is a function
$N_A(r)$ so that, for all $r$, for all $N > N_A(r)$, every $r$-coloring of
the complete graph on $[N]$ has a solution $\vect{x} = (x(1), \ldots, x(k))$
so that (1) the edges $\{x(i), x(j)\}$ are all the same color, and
(2) the values $\{x(i)\}$ are distinct.
\end{definition}

We require a solution by distinct values due to degeneracy issues which do not
appear in the case of coloring points. Further, for non-triviality, we require
the equation to contain at least three variables.

In this paper, we give two extensions of Rado's ``columns condition'' to the
graph setting --- the {\it weak} and {\it strong} graph columns conditions.
We will show that the weak version is necessary for an equation to be
graph-regular, and the strong version is sufficient. We also show that
the notions of partition-regular and graph-regular equations do not have a
satisfying extension to hypergraph-regular equations.

\section{The Graph Columns Condition}\label{se:GCC}

\subsection{Definitions}

Rado's theorem \cite{GRS}\cite{rado} characterizes the regular equations
with use of the columns condition, which we state here.

\begin{definition}\label{de:CC}
A matrix $A$ with $n$ columns satisfies the columns condition if there is a
sequence of vectors $\vect{z}_1, \ldots, \vect{z}_T$ in the nullspace of $A$
and decreasing sequence of sets $R_1 \supseteq \ldots \supseteq R_T$
so that
\begin{enumerate}
\item If $i \in R_t$, then $z_s(i)=0$ for all $s \le t$.
\item If $i \notin R_t$, then there is an $s \le t$ with $z_s(i) = 1$
\item $R_T = \emptyset$.
\end{enumerate}
\end{definition}
Although it is not commonly stated in this language, the above is easily seen
to be equivalent to the usual definition.

The remarkable fact, proven for example in \cite{GRS}, is that the columns
condition determines whether an equation is regular.

\begin{lemma}\label{le:rado}
The equation $A \vect{x} = \vect{0}$ has a monochromatic solution under
any finite coloring of $\N$ whenever $A$ satisfies the columns condition.
If $A$ does not satisfy the columns condition then there is some
$p_0 = p_0(A)$ so that, for every prime $p > p_0$, a monochromatic solution
is avoided by the coloring $\psi_p$
(which will be introduced in Section~\ref{se:sumtozero}).
\end{lemma}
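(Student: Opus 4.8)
The plan is to prove the two halves by the classical route. For the sufficiency half, suppose $A$ satisfies the columns condition, witnessed by nullspace vectors $\vect{z}_1,\dots,\vect{z}_T$ and sets $R_1\supseteq\cdots\supseteq R_T=\es$. I would factor through Deuber's machinery of $(m,p,c)$-sets. Reading conditions (1)--(3) of Definition~\ref{de:CC} as a recipe for solving $A\vect{x}=\vect{0}$ one block at a time, one extracts integers $m,p,c$ and, for each column $i$, an integer vector $(\lambda_{i1},\dots,\lambda_{im})$ with the $(m,p,c)$-pattern (coordinates in $\{-p,\dots,p\}$, a designated leading coordinate equal to $c$) such that, for all positive integers $y_1,\dots,y_m$, the assignment $x(i)=\sum_{j}\lambda_{ij}y_j$ both solves $A\vect{x}=\vect{0}$ and makes each $x(i)$ an element of the $(m,p,c)$-set generated by $y_1,\dots,y_m$; the level $j$ records which of the sets $R_{j-1}\setminus R_j$ contains column $i$, the equations cancel exactly because each $\vect{z}_j$ lies in the nullspace, and $p$, $c$ are chosen large enough to absorb the denominators and coefficient bounds coming from the $\vect{z}_j$. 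I would then invoke the partition regularity of $(m,p,c)$-sets: for all $m,p,c,r$ there is $n=n(m,p,c,r)$ such that every $r$-colouring of $[n]$ contains a monochromatic $(m,p,c)$-set. Taking the generators of such a set as the $y_j$ forces all the $x(i)$ into one colour class, so $N_A(r)=n(m,p,c,r)$ works.

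The partition regularity of $(m,p,c)$-sets is itself proved by induction on $m$ with van der Waerden's theorem as the engine: the case $m=1$ is essentially ``a long monochromatic arithmetic progression together with its common difference,'' supplied by van der Waerden plus pigeonhole, while the inductive step lays many translated copies of an $(m-1,p,c)$-configuration along a long arithmetic progression and applies van der Waerden to the induced colouring to pick out a fresh generator $y_m$ of the same colour. I expect this induction --- and bookkeeping the growth of $n$ --- to be the technical heart of the lemma, since it is where the genuine Ramsey-theoretic content enters.

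For the non-regularity half, suppose $A$ fails the columns condition, and let $p_0=p_0(A)$ be large enough that for every prime $p>p_0$ and every set $S$ of columns of $A$, any column in the $\Zp$-span of $S$ already lies in its $\Q$-span (only finitely many subdeterminants of $A$ are relevant, so such a $p_0$ exists); in particular a mod-$p$ vanishing column-sum vanishes over $\Q$. Fix a prime $p>p_0$; $\psi_p$ assigns to each positive integer the colour of its least significant nonzero base-$p$ digit, i.e.\ $n\,p^{-v_p(n)}\bmod p$, where $v_p$ is $p$-adic valuation. If $\vect{x}$ were a $\psi_p$-monochromatic solution of colour $d$, write $h_1<h_2<\cdots$ for the distinct values of $v_p(x(i))$ and $B_t=\{i\st v_p(x(i))=h_t\}$. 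Reducing $A\vect{x}=\vect{0}$ modulo $p^{h_1+1}$ gives $d\sum_{i\in B_1}(\text{column }i)\equiv\vect{0}\pmod p$, hence $\sum_{i\in B_1}(\text{column }i)=\vect{0}$ over $\Q$; pushing the reduction through the successive moduli $p^{h_t+1}$ and inductively removing the contributions of $B_1,\dots,B_{t-1}$ shows that each $\sum_{i\in B_t}(\text{column }i)$ lies in the $\Zp$-span, hence the $\Q$-span, of $\bigcup_{s<t}B_s$. Since the $B_t$ partition the columns, these relations are precisely the columns condition for $A$ --- a contradiction. The delicate point is organising the level-by-level peeling so that the $B_t$-relation emerges cleanly; the choice of $p_0$ is what lets each mod-$p$ relation be read as a rational one.

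In short, the main obstacle is the sufficiency half, and within it the partition regularity of $(m,p,c)$-sets via van der Waerden's theorem; passing from the columns condition to an $(m,p,c)$-set, and the whole non-regularity half, are bookkeeping once the right invariants --- the block decomposition and the $p$-adic levels --- are fixed.
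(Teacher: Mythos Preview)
The paper does not prove this lemma at all: it is stated as a known result, with the sentence immediately preceding it --- ``The remarkable fact, proven for example in \cite{GRS}'' --- serving as the only justification. So there is no ``paper's own proof'' to compare against; Lemma~\ref{le:rado} is simply Rado's theorem imported from the literature.

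Your outline is the classical proof one would find in that reference, and it is correct. The sufficiency half via Deuber $(m,p,c)$-sets is exactly how Graham--Rothschild--Spencer do it, and your translation of Definition~\ref{de:CC} into the $(m,p,c)$ pattern is right: setting $B_t=R_{t-1}\setminus R_t$ recovers the usual block partition, and the nullspace vectors $\vect{z}_t$ are precisely the witnesses that $\sum_{i\in B_t}\vect{a}_i$ lies in the rational span of the earlier blocks. Your necessity argument is also the standard one; the only point worth flagging is that the paper's Definition~\ref{de:CC} is phrased in terms of nullspace vectors rather than column-sum relations, so when you conclude ``these relations are precisely the columns condition for $A$'' you should note explicitly how to manufacture the $\vect{z}_t$ from the $B_t$ (take $\vect{z}_t$ to be $1$ on $B_t$, $0$ on $R_t$, and filled in on $B_1\cup\cdots\cup B_{t-1}$ by the span relation), but this is routine.
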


There are several ways to extend the columns condition to apply to
edge-colorings. We state two.

\begin{definition}\label{de:GCC}
We say a matrix $A$ with $n$ columns satisfies the
{\it weak graph columns condition} (WGCC) if there is a sequence of vectors
$\vect{z}_0, \ldots, \vect{z}_T$ in the nullspace of $A$,
and a decreasing sequence of graphs $R_0 \supseteq \ldots \supseteq R_T$ with
common vertex set $[n]$ so that
\begin{enumerate}
\item If $\{i, j\} \in R_t$, then $z_s(i)=z_s(j)$ for all $s < t$.
\item If $\{i, j\} \notin R_t$, then there is an $s \le t$ with
  $|z_s(j) - z_s(i)| = 1$.
\item $R_T$ is empty.
\item $\vect{z}_0 = \vect{1}$.
\end{enumerate}

Further, we say $A$ satisfies the {\it strong graph columns condition} (SGCC)
if we may replace (1) and (2) by ($1^*$) and ($2^*$):
\begin{enumerate}
\item[$1^*.$] If $\{i, j\} \in R_t$, $z_s(i)=z_s(j) \in \{0, 1\}$
  for all $s < t$
\item[$2^*.$] If $\{i, j\} \notin R_t$, then there is an $s \le t$ with
  $z_s(i) = 0$ and $z_s(j) = 1$ (or vice versa).
\end{enumerate}
\end{definition}

In words, the $\vect{z}_t$'s are restricted so that, for each $i, j$ pair,
as $t$ increases, the values $z_t(i), z_t(j)$ are initially equal, remain
equal until they differ by exactly 1, and are unrestricted after that.
An edge between $i$ and $j$ on graph $R_t$ means that pair remains restricted
through time $t$. If $\{i, j\} \in R_t$, then we say
the pair is {\it restricted} at time $t$, otherwise it is {\it unrestricted}.

The strong graph columns conditions requires conditions on the values
$z_t(i)$ in addition to the differences across edges.

\begin{example}\label{ex:6vars}
Let
\[
A = \left( \begin{array}{rrrrrr}
1 & -1 & 0 & -1 & 4 & -3\\
0 &  0 & 1 & -1 & 1 & -1\\
\end{array} \right).
\]
Here is one sequence of vectors showing $A$ satisfies
the graph columns condition (weak and strong):
\[
\vect{z}_0 = \left(
\begin{array}{c}
1 \\ 1 \\ 1 \\ 1 \\ 1 \\ 1 \\
\end{array}
\right) \qquad
\vect{z}_1 = \left(
\begin{array}{c}
1 \\ 1 \\ 0 \\ 0 \\ 0 \\ 0 \\
\end{array}
\right) \qquad
\vect{z}_2 = \left(
\begin{array}{c}
1 \\ 0 \\ 1 \\ 1 \\ 0 \\ 0 \\
\end{array}
\right) \qquad
\vect{z}_3 = \left(
\begin{array}{c}
3 \\ 0 \\ 1 \\ 0 \\ 0 \\ 1 \\
\end{array}
\right)
\]
The corresponding restriction graphs may be described simply:
\begin{itemize}
\item $R_1$: All edges among \{1, 2\} and \{3, 4, 5, 6\} remain restricted.
\item $R_2$: Edges \{3, 4\} and \{5, 6\} remain restricted.
\item $R_3$ is empty --- no edges are restricted.
\end{itemize}
Note that, at each step, $R_t$ is a union of
disjoint cliques. This always happens.
\end{example}

%
%

\begin{example}\label{ex:8vars}
Let
\[
A = \left( \begin{array}{rrrrrrrc}
1 & -1 & 0 &  0 & 0 &  0 & -1 &   1 \\
0 &  0 & 1 & -1 & 0 &  0 & -1 &   1 \\
0 &  0 & 0 &  0 & 1 & -1 & -1 &   1 \\
0 & -1 & 0 & -1 & 0 & -1 & -r & r+1 \\
\end{array} \right).
\]
Here is one sequence of vectors showing $A$ satisfies
the weak graph columns condition:
\[
\vect{z}_0 = \left(
\begin{array}{c}
1 \\ 1 \\ 1 \\ 1 \\ 1 \\ 1 \\ 1 \\ 1 \\
\end{array}
\right) \qquad
\vect{z}_1 = \left(
\begin{array}{c}
1 \\ 1 \\ 1 \\ 1 \\ 0 \\ 0 \\ 0 \\ 0 \\
\end{array}
\right) \qquad
\vect{z}_2 = \left(
\begin{array}{c}
1 \\ 1 \\ 0 \\ 0 \\ 1 \\ 1 \\ 0 \\ 0 \\
\end{array}
\right) \qquad
\vect{z}_3 = \left(
\begin{array}{c}
1 \\ 0 \\ 1 \\ 0 \\ 1 \\ 0 \\ r+1 \\ r \\
\end{array}
\right)
\]

Notice that $\vect{z}_3$ relaxes the restriction on the 7th and 8th columns
by using values $r$ and $r+1$, rather than 0 and 1 as required by the strong
graph columns condition.
\end{example}

We now state our main result:

\begin{theorem}\label{th:goal}
Fix a matrix $A$.
If $A \vect{x} = \vect{0}$ is graph-regular, 
then $A$ satisfies the weak graph columns condition.
If $A$ satisfies the strong graph columns condition,
then $A \vect{x} = \vect{0}$ is graph-regular.
\end{theorem}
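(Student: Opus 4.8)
The plan is to treat the two halves of Theorem~\ref{th:goal} separately: the necessity of the weak condition by producing ``bad'' edge-colourings in the style of Rado's proof, and the sufficiency of the strong condition by locating a monochromatic solution inside a Deuber/Hales--Jewett-type structure.

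\textbf{Necessity of the WGCC.} I would argue the contrapositive. The requirement $A\vect 1 = \vect 0$ --- equivalently, clause~(4) of Definition~\ref{de:GCC} --- is handled in Section~\ref{se:sumtozero} (where $\psi_p$ lives): if the rows of $A$ do not sum to zero, a $\psi_p$-flavoured colouring forces every monochromatic clique to be constant modulo an unboundedly large modulus, contradicting distinctness, so such an $A$ is not graph-regular. So assume $A\vect 1 = \vect 0$ but that $A$ fails the remaining clauses of the WGCC. For a large prime $p$, colour the edge $\{i,j\}$ of $K_{\N}$ by the last nonzero base-$p$ digit of $|i-j|$, together with a bounded block of further digit data --- the graph analogue of $\psi_p$ --- and suppose for contradiction that some solution $\vect x$ with distinct entries is monochromatic. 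Grouping the indices $i$ by how deeply the base-$p$ expansions of the $x(i)$ agree (``$\lfloor x(i)/p^t\rfloor$ equal'' is an equivalence relation) produces a decreasing chain of graphs $R_0\supseteq\cdots\supseteq R_T = \emptyset$, each a disjoint union of cliques as in the remark after Example~\ref{ex:6vars}; reading the vectors $\vect z_t$ off the corresponding digits, one finds that monochromaticity forces the first disagreement between two indices to be a single unit (clause~(2)), that normalising gives $\vect z_0 = \vect 1$, and that $\vect z_t\in\ker A$ --- using $A\vect 1 = \vect 0$ and that $p$ exceeds every entry of $A$, so that the mod-$p$ relations among the digits are genuine integer equations. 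This exhibits a WGCC certificate, a contradiction. The delicate point is checking that the extracted data really nests and really satisfies the ``differ by exactly $1$'' clause, which is where $p$ must be taken large.

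\textbf{Sufficiency of the SGCC.} Given a strong certificate $\vect z_0 = \vect 1, \vect z_1,\ldots,\vect z_T$ with $K_n = R_0\supseteq\cdots\supseteq R_T = \emptyset$ (each $R_t$ a disjoint union of cliques, refining as $t$ grows), the $\vect z_t$ span a subspace of $\ker A$, so every $\vect x = \mu_0\vect z_0 + \mu_1\vect z_1 + \cdots + \mu_T\vect z_T$ solves $A\vect x = \vect 0$. Given an $r$-edge-colouring $\chi$ of $K_N$, the task is to choose the integer scales $\mu_t$ so that $\vect x$ has distinct entries and all $\binom n2$ edges $\{x(i),x(j)\}$ carry one colour, with $N = N_A(r)$ depending on $r$ only. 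The structural input is that clauses $(1^*)$ and $(2^*)$ force $z_s(i) = z_s(j)\in\{0,1\}$ for $s$ below the level $\ell(i,j)$ where the blocks of $i$ and $j$ split, and force a clean $0$-versus-$1$ disagreement at level $\ell(i,j)$ or $\ell(i,j)-1$; and a non-$\{0,1\}$ entry of $\vect z_t$ can only sit in a column already isolated in $R_{t+1}$. Hence, writing the $\mu_t$ in a fixed large base, each pair $\{x(i),x(j)\}$ becomes a pair of words sharing a prefix and then differing by a unit, and the whole configuration lives inside a bounded ``combinatorial-cube''-type family. I would run a Hales--Jewett (or parameter-set / Gallai / Deuber) argument on the space of scale-tuples $(\mu_0,\ldots,\mu_T)$: moving along a combinatorial line there slides the relevant blocks of vertices in a coordinated way, which is exactly what forces the colours of different edges to coincide, so a monochromatic line yields the desired scales; the $\{0,1\}$-restriction of the strong condition is what keeps the alphabet finite and stops ``multi-step'' differences such as $2\mu_t$ from spawning edges of spuriously different colours.

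\textbf{Main obstacle.} The hard step is that last Ramsey argument. Off-the-shelf theorems return a \emph{set} of parameters sharing a colour, or a combinatorial line all of whose points carry the same colour-\emph{pattern} across the $\binom n2$ edges, whereas what is needed is one point whose pattern is \emph{constant}. Bridging this requires setting up the parametrization so that sliding along a line actually identifies one edge $\{x(i),x(j)\}$ with another $\{x(i'),x(j')\}$ --- feasible only because of the way the split-levels and the $\{0,1\}$ entries are arranged --- while keeping the $x(i)$ distinct and inside $[N]$ with $N$ depending on $r$ alone. Making that bookkeeping close up is the technical heart of the proof, and is presumably why the argument yields the strong condition rather than the weak one.
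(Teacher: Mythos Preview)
Your plan here is essentially the paper's. The paper also builds the candidate solution as a combination of the SGCC vectors,
\[
\vect{w}=\sum_{t=1}^{T}\bigl(x(t)d(t)\,\vect{1}+(y(t)-x(t))d(t)\,\vect{z}_t\bigr),
\]
so that at level $t$ the value $z_t(i)\in\{0,1\}$ selects the coordinate $x(t)d(t)$ or $y(t)d(t)$. The Ramsey input is not Hales--Jewett per se but the hierarchical $Grid_n$ structure of Lemma~\ref{le:monogrid} (imported from \cite{parrish}) --- precisely the ``combinatorial-cube-type family'' you allude to --- and Lemma~\ref{le:solningrid} then checks that each edge $\{w(i),w(j)\}$ sits at the level $t^*$ where $z_{t^*}(i),z_{t^*}(j)$ first split as $0$ and $1$. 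You correctly identify that the $\{0,1\}$ restriction in the SGCC is exactly what makes this embedding possible, and that this step is the technical heart.

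\textbf{Necessity.} Here your route diverges from the paper's, and the direct-extraction idea has a real gap. You propose to read the WGCC certificate straight off a single monochromatic solution $\vect{x}$, taking the $\vect{z}_t$ to be (essentially) digit-vectors of the $x(i)$ in base $p$, grouped by ``$\lfloor x(i)/p^t\rfloor$ equal''. Two problems. First, $A\vect{x}=\vect{0}$ does \emph{not} force each digit-vector into $\ker A$: carries between digits mean the per-digit relations are only congruences with carry terms, and ``$p$ exceeds every entry of $A$'' controls only the bottom digit. Second, monochromaticity under your colouring says the last nonzero base-$p$ digit of $x(j)-x(i)$ is a fixed $c\in[p-1]$; this is \emph{not} the same as ``the first disagreeing digits of $x(i)$ and $x(j)$ differ by $1$'' (borrows intervene, and even without them you get a difference of $c$, not $1$, so clause~(2) of the WGCC does not fall out).

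The paper sidesteps both issues by an indirect reduction to classical Rado. From $A$ it builds, for each permutation $\sigma$, an auxiliary matrix $C(\sigma)$ with one column per pair $(i,j)$, designed so that a $\varphi_p$-monochromatic solution of $A\vect{x}=\vect{0}$ with the $x$'s ordered by $\sigma$ yields a $\psi_p$-monochromatic solution of $C(\sigma)\vect{y}=\vect{0}$ with $y(i,j)=x(\sigma(j))-x(\sigma(i))$. Lemma~\ref{le:rado} then gives the ordinary columns condition for some $C(\sigma)$, and that certificate is translated back to the WGCC for $A$ via $z_t(i)=w_t(1,i)$ together with the identity $z_t(\ell)-z_t(k)=w_t(k,\ell)$ coming from the $\vect{b}_{ij}$-rows of $C$. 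The ``$=1$'' in clause~(2) thus comes from the abstract columns-condition vectors for $C$, not from the digits of any particular solution, and no carry bookkeeping is ever needed because one works in the nullspace of $C$ rather than unpacking a specific $\vect{x}$.
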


We will prove WGCC is necessary in Section~\ref{se:necessary},
using Lemma~\ref{le:rado}.
In Section~\ref{se:sufficient}, we will show SGCC is sufficient, using
machinery from \cite{parrish}.

\section{Necessary conditions for graph-regularity}\label{se:necessary}

\subsection{Coefficients must add to 0}\label{se:sumtozero}
Rado's theorem tells us that, when coloring $\N$, an equation is regular
precisely if it has a monochromatic solution under a particular class of
colorings --- the so-called ``super mod $p$'' colorings \cite{GRS}.
The idea is to color each number by factoring out any power of $p$, and color
by what remains. We will use related colorings to avoid
monochromatic solutions to many equations in the graph setting.
The super mod $p$ coloring itself will be introduced as $\psi_p$ later on.

Consider $f_n$, a coloring of $\binom{\N}{2}$ given by
\[
f_n(an+x,bn+y) = \left\{
\begin{array}{ll}
\hbox{blue}  & \hbox{if } x = y     \\
\min \{x,y\} & \hbox{if } x \neq y, \\
\end{array}
\right.
\]
where $x,y \in \{0, 1, \ldots, n-1\}$.

Although strange, consider its purpose. We claim that all monochromatic
triangles under $f_n$ are blue. Consider a triangle $x = an+i,y=bn+j,z=cn+k$,
with $i,j,k \in \{0,1,\ldots,n-1\}$.

Suppose no edge among these points is blue. Then the numbers $i,j,k$
must be distinct. Reorder so that $i < j < k$.
Then we see that $f_n(x,y) = f_n(x,z) = i$, while $f_n(y,z)=j \neq i$.
Thus a triangle without a blue edge cannot be monochromatic. Turning this
around, any monochromatic triangle must have one, and hence all, blue edges.

Going back to the definition of $f_n$, this means that any monochromatic
triangle (and hence any non-trivial monochromatic clique) must
represent only one congruence class mod $n$.

This means any equation which has no solution within a congruence class
may be avoided by using $f_n$. The only problem now is that this doesn't
actually rule out any linear homogeneous equation --- they may all be
solved by multiples of $n$. By making a slight change, however, this
becomes very powerful.

Fix a prime $p$, and define the coloring $g_p$ of $\binom{\N}{2}$ by
\[
g_p(p^j a, p^k b) = \left\{
\begin{array}{ll}
\hbox{red}   & \hbox{if } j \neq k \\
f_{p-1}(a,b) & \hbox{if } j = k,   \\
\end{array}
\right.
\]
where $a$ and $b$ are not divisible by $p$. Note that this is a
$p$-coloring, since the numbers $a$ and $b$ are always in $[p-1]$,
so their minimum must be in $[p-2]$. Adding in red and blue, we reach
$p$ colors.

The same argument as above shows that any triangle which is monochromatic
under $g_p$ must be red or blue. This brings us to our first
general result.

\begin{lemma}\label{le:sumtozero}
If $A \vect{x} = \vect{0}$ is graph-regular, then $\sum \vect{a}_i = \vect{0}$.
\end{lemma}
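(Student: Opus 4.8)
The plan is to use the coloring $g_p$ constructed just above, together with Rado's theorem (Lemma~\ref{le:rado}), to rule out graph-regularity whenever $\sum \vect{a}_i \ne \vect{0}$. The key observation already established is that every monochromatic triangle under $g_p$ — and hence every nontrivial monochromatic clique — is either red or blue. A blue clique consists of numbers $p^j a, p^j b, \ldots$ all sharing the same $p$-adic valuation $j$ and the same residue of $a, b, \ldots$ modulo $p-1$; a red clique is more flexible but its vertices still all have the same valuation is \emph{not} forced — wait, in the red case $j \ne k$ is allowed pairwise, so I must be careful. So first I would analyze the two cases separately.

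First, the blue case. If $\vect{x}$ is a monochromatic solution with all edges blue, then writing $x(i) = p^{j} a_i'$ with all $a_i'$ congruent mod $p-1$ and all sharing exponent $j$, after dividing out $p^j$ we get a solution in which every coordinate is $\equiv c \pmod{p-1}$ for a single fixed nonzero residue $c$. Substituting $x(i) = c + (p-1)y_i$ into $A\vect x = \vect 0$ and using $A$ applied to the all-ones-times-$c$ vector gives $c\sum\vect a_i \equiv \vect 0 \pmod{p-1}$ in the relevant sense; since $\sum \vect a_i \ne \vect 0$ as an integer vector, choosing $p-1$ larger than the entries of $\sum\vect a_i$ and $c$ coprime to $p-1$ (e.g.\ $c=1$) forces $\sum \vect a_i = \vect 0$, a contradiction. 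So blue monochromatic solutions are impossible for suitable $p$.

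Second, the red case. Here I would leverage that $g_p$ restricted to a would-be red solution essentially sees only $p$-adic valuations, and in fact the relevant structure is exactly the ``super mod $p$'' coloring $\psi_p$ of Lemma~\ref{le:rado}: a red clique is one on which the valuation function is not constant across \emph{every} pair, but the finer $f_{p-1}$-data is irrelevant. The point is that $g_p$ is a refinement of (a coloring closely related to) $\psi_p$ on valuations, so a red monochromatic solution to $A\vect x = \vect 0$ would in particular be a $\psi_p$-monochromatic solution. If $A$ does not satisfy the columns condition, Lemma~\ref{le:rado} rules this out for all large $p$. And here is where the hypothesis bites: $\sum\vect a_i \ne \vect 0$ implies $A$ fails the columns condition — indeed the all-ones vector would have to appear (up to scaling) among the $\vect z$'s at the first stage for the columns condition to start, since at $t=1$ we need $\sum_{i\notin R_1}\vect a_i$ to lie in a suitable span, and the standard argument shows $\sum \vect a_i = \vect 0$ is forced by the columns condition (this is the well-known necessary condition in Rado's theorem). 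So if $\sum\vect a_i \ne \vect 0$, then $A$ fails the columns condition, and Lemma~\ref{le:rado} kills the red solutions.

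Combining: pick a prime $p$ large enough that (i) $p-1$ exceeds the absolute values of the coordinates of $\sum\vect a_i$, and (ii) $p > p_0(A)$ from Lemma~\ref{le:rado}. Then $g_p$ is a finite coloring of $\binom{\N}{2}$ with no monochromatic solution to $A\vect x = \vect 0$ in distinct values, contradicting graph-regularity. I expect the main obstacle to be the red case — specifically, cleanly identifying the $g_p$-red structure with $\psi_p$-monochromatic structure so that Lemma~\ref{le:rado} applies verbatim; one must check that the $f_{p-1}$ layer does not accidentally create a red solution that $\psi_p$ would forbid, and that valuations behaving arbitrarily pairwise within a red clique is still captured by the super-mod-$p$ analysis. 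The blue case and the final prime-choosing bookkeeping are routine.
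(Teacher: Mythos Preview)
Your red case contains two genuine errors that break the argument.

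First, a red clique under $g_p$ is \emph{not} $\psi_p$-monochromatic. Red means that for every pair $\{x_i,x_j\}$ the $p$-adic valuations differ; it says nothing whatsoever about the residues $\psi_p(x_i)$, which are defined after stripping the $p$-power and reducing mod $p$. So there is no way to feed a red monochromatic solution into Lemma~\ref{le:rado}.

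Second, your implication ``$\sum \vect a_i \ne \vect 0 \Rightarrow A$ fails the columns condition'' is false already for a single equation. Schur's equation $x+y-z=0$ has coefficient sum $1\ne 0$ yet satisfies the columns condition (the subset $\{x,z\}$ has coefficients summing to zero), and is partition-regular. The columns condition only requires \emph{some} nonempty subset of columns to sum to zero at the first stage, not all of them.

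The paper's red case is entirely elementary and avoids Rado's theorem: since all valuations $r_i$ are pairwise distinct, let $r_j$ be the smallest; dividing $\sum_i a_i x_i = 0$ by $p^{r_j}$ and reducing mod $p$ leaves only the $j$-th term, giving $p\mid a_j c_j$, hence $p\mid a_j$. Your blue case is closer in spirit to the paper's, but note you do not get to \emph{choose} $c$: it is determined by whatever monochromatic solution graph-regularity hands you, so the remark ``e.g.\ $c=1$'' is illegitimate. The paper sidesteps this by working mod $p$ (where $c\in[p-1]$ is automatically invertible) rather than mod $p-1$, and then concludes globally: each prime $p$ divides one of the finitely many integers $a_1,\ldots,a_k,\sum a_i$, so one of them must vanish, and since the $a_i$ are nonzero it is $\sum a_i$.
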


\begin{proof}
Since each row of $A$ corresponds to a graph-regular equation,
it suffices to consider the case when $A$ has a single row.

Fix a prime $p$ and color $\binom{\N}{2}$ by $g_p$. Suppose
$x_1, \ldots, x_k$ are distinct values satisfying
$a_1 x_1 + \ldots + a_k x_k = 0$, with the edges among them
monochromatic. Write $x_i = p^{r_i} (b_i p + c_i)$.
As observed earlier, this clique must be either red or blue.

\smallskip
\noindent
{\bf Case 1}: The clique is red. This means each $r_i$ is distinct.
Let $r_j$ be the lowest exponent. We see that
\[
\begin{array}{rcl}
\displaystyle
0 & = & \displaystyle \sum a_i p^{r_i} (b_i p + c_i)\\
  & = & \displaystyle \sum a_i p^{r_i - r_j} (b_i p + c_i)\\
  & = & \displaystyle a_j (b_j p + c_j) + p
        \left( \sum_{i \neq j} p^{r_i - r_j - 1} (b_i p + c_i) \right)\\
  & \equiv & \displaystyle a_j c_j \mod{p}.\\
\end{array}
\]
Since $p$ does not divide $c_j$, it must divide $a_j$ instead.

\smallskip 
\noindent
{\bf Case 2}: The clique is blue. This means each $r_i = r$, and each $c_i = c$.
We see that
\[
\begin{array}{rcl}
0 &   =    & \displaystyle \sum a_i p^r (b_i p + c) \\
  &   =    & \displaystyle \sum a_i (b_i p + c)              \\
  & \equiv & \displaystyle c \left( \sum a_i \right) \mod{p}.\\
\end{array}
\]
Since $p$ does not divide $c$, it must divide $\sum a_i$ instead.

\bigskip
Since every $g_p$ must have a monchromatic solution, each $p$
gives a new divisor to one of $\{a_1, \ldots, a_k, \sum a_i\}$.
Therefore one of them must be divisible by infinitely many primes.
Since $a_1, \ldots, a_k$ are assumed to be non-zero, it must be that
$\sum a_i = 0$.
\end{proof}

Consider such an equation, $\vect{a}_1 x_1 + \ldots + \vect{a}_k x_k = 0$,
where the coefficients sum to 0. We may rewrite this as, for instance,
\[
\vect{a}_1 (x_1 - x_k) + \ldots + \vect{a}_{k-1} (x_{k-1} - x_k) = 0,
\]
now an equation relating differences.
This suggests that we should consider colorings based on these differences
--- colorings of the form $\chi(x < y) = f(y-x)$. We may now take guidance
from Rado's theorem to get a better handle on things.

For a prime $p$, and $x = p^r (bp + s)$, let $\psi_p(x) = s \in [p-1]$
be the ``super mod $p$'' coloring. Rado's theorem says that the regular
systems of linear equations are precisely those which have
monochromatic solutions under $\psi_p$ for all $p$.
This, he found, is equivalent to satisfying the ``columns condition,''
which we will discuss more later. For a single equation, it
simply requires that some (nonempty) subset of the coefficients add
to zero.

For us, Rado's theorem suggests that every equation which we can avoid
with this type of coloring (based only on differences between endpoints)
will also be avoided by some $\psi_p$. This is a powerful idea, but we
begin with a simple consequence.

\begin{theorem}\label{th:partition}
Let $\sum_{i=1}^k a_i x_i = 0$ be graph-regular.
Then there is a nonempty set $I \subsetneq [k]$ so that
\[
\sum_{i \in I} a_i = \sum_{j \notin I} a_j = 0.
\]
\end{theorem}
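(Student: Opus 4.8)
The plan is to argue by contradiction, combining the ``differences'' reformulation noted just above with the negative half of Rado's theorem (Lemma~\ref{le:rado}). The first observation is that, since Lemma~\ref{le:sumtozero} already gives $\sum_{i=1}^k a_i = 0$, it suffices to produce a nonempty \emph{proper} subset $I \subsetneq [k]$ with $\sum_{i \in I} a_i = 0$, because its complement then automatically sums to $0$ as well. So I would suppose, for contradiction, that no nonempty proper subset of $[k]$ has coefficient-sum $0$. I would also assume every $a_i \neq 0$: if some coefficient vanishes then either all do, in which case the equation is trivial and any nonempty proper $I$ works, or else $I = \{i : a_i \neq 0\}$ is already nonempty, proper, and zero-sum by Lemma~\ref{le:sumtozero}.

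Next, for each $j_0 \in [k]$ I would introduce the auxiliary equation $E_{j_0} \st \sum_{i \neq j_0} a_i y_i = 0$ in the $k-1$ variables $y_i$, $i \neq j_0$. Under the contradiction hypothesis no nonempty subset of $\{a_i : i \neq j_0\}$ can sum to $0$ (such a subset would be a nonempty proper subset of $[k]$), so, since for a single equation the columns condition amounts exactly to the existence of a nonempty zero-sum subset of coefficients (as recalled just above the statement), each $E_{j_0}$ \emph{fails} the columns condition. Applying the second half of Lemma~\ref{le:rado} to these $k$ equations, I would fix a prime $p$ exceeding all of the thresholds $p_0(E_{j_0})$, so that for every $j_0$ the point-coloring $\psi_p$ of $\N$ has no monochromatic solution of $E_{j_0}$.

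Then I would move to the graph. Color $\binom{\N}{2}$ by $\chi_p(\{x,y\}) = \psi_p(|x-y|)$, a $(p-1)$-coloring. Graph-regularity supplies, for $N$ sufficiently large, distinct values $x_1, \ldots, x_k$ with $\sum_i a_i x_i = 0$ and all edges $\{x_i,x_j\}$ of a single color $c$. Let $m = \min_i x_i$, attained at the (unique, by distinctness) index $j_0$, and set $y_i = x_i - m > 0$ for $i \neq j_0$. Then $\sum_{i \neq j_0} a_i y_i = \sum_i a_i x_i - m\sum_i a_i = 0$ by Lemma~\ref{le:sumtozero}, and $\psi_p(y_i) = \psi_p(|x_i - x_{j_0}|) = \chi_p(\{x_i, x_{j_0}\}) = c$ for each $i \neq j_0$. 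Hence $(y_i)_{i \neq j_0}$ is a monochromatic solution of $E_{j_0}$ under $\psi_p$, contradicting the choice of $p$. This contradiction shows that some nonempty proper $I \subsetneq [k]$ satisfies $\sum_{i \in I} a_i = 0$, and then $\sum_{j \notin I} a_j = 0$ by Lemma~\ref{le:sumtozero}, as desired.

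The one genuinely delicate point — the step I expect to need the most care — is this last transfer from a monochromatic clique to a monochromatic set of \emph{points}: it forces a shift by $\min_i x_i$, which zeroes out exactly one variable, and $\psi_p$ is only defined on positive integers. That is why I would work with the $k$ ``delete-one-variable'' equations $E_{j_0}$ rather than with a single difference equation, and why $p$ must beat finitely many Rado thresholds simultaneously. Everything else is bookkeeping together with the two cited lemmas.
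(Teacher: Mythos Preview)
Your proof is correct and shares the paper's core idea: use the edge-coloring $\varphi_p(x<y)=\psi_p(y-x)$, take a monochromatic solution, and shift by the minimum entry to land in the Rado setting for a single equation in the differences. Where you diverge is in how you finish. The paper does the Rado computation by hand: it fixes a single prime $p>\sum_i|a_i|$, writes each $x_i-x_j=p^{r_i}(b_ip+c)$, lets $I$ be the set of indices realizing the minimal exponent $r$, and reads off $p\mid\sum_{i\in I}a_i$, hence $\sum_{i\in I}a_i=0$. You instead package this step as a black-box appeal to the negative half of Lemma~\ref{le:rado}: assuming no proper zero-sum subset exists, each deleted-variable equation $E_{j_0}$ fails the columns condition, so a single prime beating all $k$ thresholds yields the contradiction. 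Your route is slightly more modular and avoids redoing the mod-$p$ analysis; the paper's route is more direct and actually exhibits the set $I$ rather than obtaining it by contradiction. Both are short and neither needs anything the other doesn't already have available.
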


To prove this, we introduce an important coloring.
Define $\varphi_p : \binom{\N}{2} \rightarrow [p-1]$ by
\[
\varphi_p(x < y) = \psi_p(y - x).
\]

\begin{proof}
Fix a prime $p$ and color $\binom{\N}{2}$ by $\varphi_p$.
Suppose $x_1, \ldots, x_k$ are distinct values satisfying
$a_1 x_1 + \ldots + a_k x_k = 0$, with the edges among them all color $c$.
Let $x_j$ be the smallest of these values. As noted earlier, we see that
\[
\sum_{i \neq j} a_i (x_i - x_j) = 0.
\]
By choice of $x_j$, each of the terms $x_i - x_j$ is positive.
Thus we may write $x_i - x_j = p^{r_i} (b_i + c)$, since
$\varphi_p(x_j < x_i) = \psi_p(x_i-x_j) = c$. Let $r$ be the
smallest exponent among these $k-1$ terms, and let
$I = \{i \in [k]\setminus\{j\} \mid r_i = r\}$. Note that
$\emptyset \subsetneq I \subsetneq [k]$.
We see that
\[
\begin{array}{rcl}
0 & = & \displaystyle \sum_{i \neq j} a_i p^{r_i} (b_i p + c)\\
  & = & \displaystyle \sum_{i \neq j} a_i p^{r_i - r} (b_i p + c)\\
  & = & \displaystyle \sum_{i \in I} a_i (b_i p + c) +
        p \left( \sum_{i \notin I \cup \{j\}}
                 a_i p^{r_i - r - 1} (b_i p + c) \right)\\
  & \equiv & \displaystyle c \left( \sum_{i \in I} a_i \right) \mod{p}\\
\end{array}
\]
Since $c$ is in $[p-1]$, we see that $p$ divides $\sum_{i \in I} a_i$.
If we take $p > \sum_{i=1}^k |a_i|$, then the only way
this can happen is if $\sum_{i \in I} a_i = 0$. Since we already know that
$\sum_{i=1}^k a_i = 0$, we learn that $\sum_{i \notin I} a_i = 0$ as well.
\end{proof}

We may now prove what we already knew:

\begin{corollary}
No nondegenerate homogeneous linear equation of three variables
is graph-regular.
\end{corollary}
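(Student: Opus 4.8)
The plan is to read this off directly from Theorem~\ref{th:partition}. Write the equation as $a_1 x_1 + a_2 x_2 + a_3 x_3 = 0$; for a three‑variable equation, nondegeneracy means exactly that $a_1, a_2, a_3$ are all nonzero, and this is precisely the standing hypothesis under which Theorem~\ref{th:partition} (and already Lemma~\ref{le:sumtozero}) is proved. I would argue by contraposition: assume the equation is graph-regular, and derive a contradiction with the assumption that every $a_i \neq 0$.

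By Theorem~\ref{th:partition} there is a set $I$ with $\emptyset \subsetneq I \subsetneq [3]$ and $\sum_{i \in I} a_i = \sum_{j \notin I} a_j = 0$. Since $1 \le |I| \le 2$, at least one of the two blocks $I$ and $[3] \setminus I$ is a singleton, say $\{\ell\}$; but then the corresponding zero-sum equation for that block collapses to $a_\ell = 0$, contradicting nondegeneracy. That finishes the proof.

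I do not anticipate any real obstacle here: the point is simply that $[3]$ is too small to be cut into two nonempty pieces each with vanishing coefficient-sum unless some individual coefficient is already $0$. The only thing worth spelling out in the written proof is the observation that a one-element zero-sum set forces that coefficient to vanish; everything else is the trivial case check $|I| \in \{1,2\}$. (This also explains the standing restriction to at least three variables and shows that $k = 4$ is the first case in which the necessary condition of Theorem~\ref{th:partition} can be satisfied with all coefficients nonzero, e.g.\ by $x_1 - x_2 + x_3 - x_4 = 0$ with $I = \{1,2\}$.)
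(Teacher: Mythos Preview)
Your argument is correct and is essentially identical to the paper's own proof: both apply Theorem~\ref{th:partition} with $k=3$, observe that one of $I$ or $[3]\setminus I$ must be a singleton, and conclude that the corresponding coefficient vanishes, contradicting nondegeneracy.
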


\begin{proof}
Let $k = 3$, and let $I \subsetneq \{1,2,3\}$ be nonempty. Then either
$I$ or its complement has a single element. The corresponding coefficient
must be 0, meaning the equation depends on at most two variables.
\end{proof}

\begin{corollary}
Up to rescaling, the only graph-regular homogeneous linear equation
of four variables is $w-x+y-z=0$.
\end{corollary}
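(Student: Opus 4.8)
The plan is to apply Theorem~\ref{th:partition} to pin down the coefficient pattern and then run a sharpened version of the coloring argument from its proof. First note that a genuine four-variable equation has all four coefficients nonzero: if some $a_i = 0$ it becomes an equation in at most three variables, which is not graph-regular (by the preceding corollary when three variables remain, and trivially for two variables, since an equation $ax + by = 0$ has no solution by distinct values once $a+b=0$ is forced). So let $\sum_{i=1}^4 a_i x_i = 0$ be graph-regular with every $a_i \neq 0$. By Theorem~\ref{th:partition} there is a nonempty $I \subsetneq [4]$ with $\sum_{i \in I} a_i = \sum_{i \notin I} a_i = 0$; if $|I| \in \{1,3\}$, then one of $I$, $[4]\setminus I$ is a singleton, forcing one coefficient to vanish, which is impossible. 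Hence $|I| = 2$, and after relabeling the variables we may take $I = \{1,2\}$, so that $a_2 = -a_1$, $a_4 = -a_3$, and the equation reads
\[
a_1(x_1 - x_2) + a_3(x_3 - x_4) = 0.
\]
Rescaling, divide $(a_1, a_3)$ by $\gcd(|a_1|,|a_3|)$ and write $a = a_1$, $b = a_3$, now coprime; this equation is still graph-regular, being the original one up to permuting variables and scaling.

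For the key step I would color $\binom{\N}{2}$ by $\varphi_p$ for a prime $p > |a| + |b|$. Since $\varphi_p$ is a $(p-1)$-coloring, graph-regularity produces distinct $x_1, x_2, x_3, x_4$ solving the equation with all six edges a common color $c \in [p-1]$. In particular the edges $\{x_1,x_2\}$ and $\{x_3,x_4\}$ have color $c$, so $\psi_p(|x_1 - x_2|) = \psi_p(|x_3 - x_4|) = c$; write $|x_1 - x_2| = p^r u$ and $|x_3 - x_4| = p^s w$ with $u \equiv w \equiv c \pmod p$ and $p \nmid uw$. Taking absolute values in the identity $a(x_1 - x_2) = -b(x_3 - x_4)$ gives $|a|\,p^r u = |b|\,p^s w$; since $p$ divides none of $a$, $b$, $u$, $w$, we must have $r = s$, and then reducing $|a|u = |b|w$ modulo $p$ and using $p \nmid c$ gives $p \mid |a| - |b|$. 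As $\bigl||a| - |b|\bigr| < p$, this forces $|a| = |b|$, hence (by the rescaling) $|a| = |b| = 1$. So the equation is $(x_1 - x_2) + (x_3 - x_4) = 0$ or $(x_1 - x_2) - (x_3 - x_4) = 0$, and in either case, relabeling variables if necessary, it is $w - x + y - z = 0$. For the converse, $w - x + y - z = 0$ is graph-regular by Theorem~\ref{th:goal}: the nullspace vectors $\vect{z}_0 = (1,1,1,1)$, $\vect{z}_1 = (1,1,0,0)$, $\vect{z}_2 = (1,0,0,1)$ together with the restriction graphs $K_4$, $\{\{1,2\},\{3,4\}\}$, $\emptyset$ witness the strong graph columns condition.

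The only step that needs anything beyond Theorem~\ref{th:partition} as a black box is the coloring argument, and the point is to be careful about \emph{which} edges one extracts valuation data from. Running the proof of Theorem~\ref{th:partition} literally --- choosing the least of the four values, say $x_j$, and examining $\sum_{i \neq j} a_i(x_i - x_j) = 0$ --- one may land in the case where the two terms of smallest $p$-valuation are precisely those carrying coefficients $a_3$ and $a_4 = -a_3$, so the congruence yields only $p \mid a_3 + a_4 = 0$ and says nothing comparing $a_1$ to $a_3$. Reading the valuation and residue information off the two edges $\{x_1,x_2\}$ and $\{x_3,x_4\}$ distinguished by the partition $I$ --- rather than off edges to a common base vertex --- is exactly what resolves this, and no further machinery is required.
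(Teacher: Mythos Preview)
Your proof is correct. The reduction via Theorem~\ref{th:partition} to the shape $a(x_1-x_2)=-b(x_3-x_4)$ matches the paper, and your SGCC witness for the converse is valid. The divergence is in how you force $|a|=|b|$. The paper does not reuse $\varphi_p$: once it has $a(w-x)=c(z-y)$ with $\gcd(a,c)=1$ and (say) $a<c$, it picks a prime $p_1\mid c$, sets $n=r_1+1$ where $p_1^{r_1}\Vert c$, and colors an edge $\{x,y\}$ by the $p_1$-adic valuation of $x-y$ taken mod $n$; then the identity forces $\chi(w,x)$ and $\chi(y,z)$ to differ by exactly $1$ in $\Z/n\Z$, so no monochromatic solution exists. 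You instead stay with $\varphi_p$ for a single prime $p>|a|+|b|$ and read the valuation and residue information off the two edges $\{x_1,x_2\}$ and $\{x_3,x_4\}$ picked out by the partition $I$, rather than off edges to a common base vertex; this yields $|a|\equiv|b|\pmod p$ directly. Your route has the advantage of introducing no new coloring and of making explicit why the generic argument of Theorem~\ref{th:partition} (which looks only at edges to the minimum $x_j$) does not already finish the job; the paper's route has the advantage of exhibiting an explicit finite coloring that defeats every $a\neq c$ case simultaneously, without appealing to graph-regularity under arbitrarily many colors.
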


That this equation is regular was shown in \cite{parrish}. We show that
it stands alone.

\begin{proof}
Let $aw+bx+cy+dz=0$ be graph-regular, with $a,b,c,d \in \Z_{\neq 0}$.
By Theorem~\ref{th:partition}, we know that two complementary subsets
of the coefficients must add to zero. Up to permutation, this leaves
us with $aw-ax+cy-cz=0$, or rather $a(w-x)=c(z-y)$. We may assume both
$a$ and $c$ are positive by switching $w$ and $x$, or $y$ and $z$.
We claim $a = c$.

Suppose not; without loss of generality, $a < c$. By cancelling
common divisors, we may assume $a$ and $c$ are relatively prime. Hence
$c = p_1^{r_1} \cdots p_k^{r_k}$, where none of these primes divides $a$.
Let $n = r_1 + 1$.
Define an $n$-coloring $\chi:\binom{\N}{2} \rightarrow \Z/n\Z$ by
\[
\chi(x,y) \equiv k \pmod{n} \quad \hbox{ if } (x-y)
  \hbox{ is divisible by } p_1^k, \hbox{ but not by } p_1^{k+1}.
\]
Now suppose $a(w-x) = c(z-y)$, with $w,x,y,z$ distinct.
Let $\chi(w,x) = k$. This means that $a(w-x)$ represents a power of
$p_1^k$ on the left hand side. Dividing by $c$, we learn that
$(z-y)$ represents a power of $p_1^{k-r_1}$,
so $\chi(y,z) \equiv k-r_1 \equiv k+1 \not\equiv k \pmod{n}$.
Thus $\chi(w,x) \neq \chi(y,z)$, so the edges among $\{w,x,y,z\}$
are not monochromatic.
\end{proof}

\subsection{Considering $\varphi_p$}

\begin{lemma}\label{le:phip}
Let $A$ be a matrix whose columns sum to $\vect{0}$.
If the equation $A \vect{x} = \vect{0}$ has a monochromatic solution under
the edge-coloring $\varphi_p$ for every prime $p$, then $A$ satisfies the
weak graph columns condition.
\end{lemma}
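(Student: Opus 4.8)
The plan is to run the $\varphi_p$-coloring argument simultaneously over all primes and, via a compactness/pigeonhole step, distill a single combinatorial object — the sequence of nullspace vectors and restriction graphs demanded by WGCC — out of the monochromatic solutions that must exist. First I would fix, for each prime $p$, a monochromatic solution $\vect{x}^{(p)}$ to $A\vect{x}=\vect{0}$ under $\varphi_p$; after relabeling the variables we may assume the smallest entry is $x_j^{(p)}$ for a fixed index $j$ independent of $p$ (only finitely many choices, so some $j$ recurs for infinitely many $p$). As in the proof of Theorem~\ref{th:partition}, passing to differences $d_i^{(p)} = x_i^{(p)} - x_j^{(p)} \ge 0$ gives $\sum_{i\ne j} \vect{a}_i d_i^{(p)} = \vect{0}$, and each positive $d_i^{(p)}$ has the form $p^{r_i^{(p)}}(b_i^{(p)} p + c^{(p)})$ with the \emph{same} residue $c^{(p)}\in[p-1]$ across all $i$ in the clique, since the edge colors agree.

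The heart of the construction is to stratify the indices by the $p$-adic valuations $r_i^{(p)}$. Sorting the distinct values of $\{r_i^{(p)} : i\ne j\}$ in increasing order partitions $[n]\setminus\{j\}$ into level sets $L_1^{(p)}, L_2^{(p)}, \ldots$; index $j$ itself I assign to the top (its difference is $0$). I want these level sets, as $p\to\infty$, to stabilize to a fixed nested partition structure. There are only finitely many ordered set-partitions of $[n]$, so by pigeonhole infinitely many primes yield the \emph{same} partition into levels $L_1 \supseteq$-complement-wise nested blocks; restrict to those primes. Now I define $\vect{z}_0 = \vect{1}$, and for $t\ge 1$ define $\vect{z}_t$ to record ``which level you're in relative to threshold $t$'': concretely, normalize the $t$-th smallest valuation to $0$ by dividing, reduce mod $p$, and read off the vector of $c$'s and $0$'s — indices in levels below the threshold get value $0$, indices at the threshold level get value $c^{(p)} \bmod p$ normalized to $1$ (we may rescale since $c^{(p)}$ is invertible mod $p$), and indices above get a value determined by the tail sum. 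The key algebraic input, exactly the congruence computation in Theorem~\ref{th:partition}, is that collapsing everything at or above threshold $t$ and reducing mod $p$ forces $\sum_{i\in L_t} \vect{a}_i \equiv \vect{0}$, hence (taking $p$ large) $\sum_{i \in L_t}\vect{a}_i = \vect{0}$; iterating and subtracting consecutive thresholds shows each candidate $\vect{z}_t$ lies in the nullspace of $A$. The restriction graphs are then forced: $R_t$ is the disjoint union of cliques on the blocks of the partition obtained by lumping together all levels $\ge t$ together with finer data, so that $\{i,j\}\in R_t$ exactly when $i,j$ have agreed on all of $z_0,\ldots,z_{t-1}$, giving property (1); an edge drops out of $R_t$ precisely when some earlier $\vect{z}_s$ separated its endpoints by $1$, giving property (2); and $R_T=\es$ once $t$ exceeds the number of levels, giving (3). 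Property (4) holds by construction.

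The main obstacle I anticipate is the bookkeeping that makes the $\vect{z}_t$ genuinely \emph{integer} nullspace vectors with the WGCC difference-$1$ property across edges, while simultaneously being extractable uniformly in $p$. The tension is that the natural objects live in $\Zp$ (residues mod $p$), but WGCC wants honest integer vectors whose entries differ by exactly $1$ across restricted-then-broken edges; resolving this requires choosing the representatives carefully (rescaling by the inverse of $c^{(p)}$, lifting the top-level tail sums to fixed integers independent of $p$ — which is possible because those tail coefficients $\sum_{i \in L_{>t}} \vect{a}_i$ are fixed integers once the partition is fixed) and checking that the finitely many candidate integer vectors that arise are in fact forced to be nullspace vectors by the large-prime divisibility argument rather than merely congruent to one. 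I expect this step to mirror closely the ``$p > \sum|a_i|$'' trick already used twice in the excerpt, so the write-up should be able to lean on that pattern; the nested-partition pigeonhole and the verification of the graph axioms (1)--(4) are then routine.
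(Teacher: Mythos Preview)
Your approach is genuinely different from the paper's, and as written it has a real gap.

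The paper does \emph{not} run a direct $p$-adic stratification on the solutions $\vect{x}^{(p)}$. Instead it builds, for each permutation $\sigma$ of $[n]$, an auxiliary matrix $C(\sigma)$ whose columns are indexed by the pairs $\{i,j\}\in\binom{[n]}{2}$, designed so that a $\varphi_p$-monochromatic solution of $A\vect{x}=\vect{0}$ with the entries ordered by $\sigma$ yields a $\psi_p$-monochromatic solution of $C(\sigma)\vect{y}=\vect{0}$ via $y(i,j)=x(\sigma(j))-x(\sigma(i))$. A pigeonhole over the finitely many $\sigma$ together with Lemma~\ref{le:rado} forces some $C(\sigma)$ to satisfy the classical columns condition; translating that back through the identity $z_t(\ell)-z_t(k)=w_t(k,\ell)$ gives WGCC for $A$. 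All of the recursive $p$-adic bookkeeping is thus outsourced to the black box of Rado's theorem applied to $C(\sigma)$.

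Your construction, by contrast, extracts a single ordered partition $L_1,\ldots,L_T$ of $[n]$ from the valuations $r_i^{(p)}$ of the differences $d_i^{(p)}=x_i^{(p)}-x_j^{(p)}$, and then defines $\vect{z}_t$ level-by-level. The problem is that this never separates two indices $i,i'$ lying in the \emph{same} level $L_s$: every one of your $\vect{z}_t$ assigns them the same value (both $0$ below threshold, both $1$ at threshold, both the same tail value above threshold), so the edge $\{i,i'\}$ remains in $R_t$ for all $t$ and $R_T\neq\emptyset$. This is not a cosmetic issue: the monochromaticity of the edge $\{x_i,x_{i'}\}$ under $\varphi_p$ says that $d_i^{(p)}-d_{i'}^{(p)}=p^{r+1}(b_i-b_{i'})$ again has $\psi_p$-color $c$, which is a constraint on the \emph{next} $p$-adic digits $b_i$, not on the valuations $r_i$ you have recorded. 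To separate $\{i,i'\}$ you must recurse inside each $L_s$, stratifying the $b_i$'s by their own valuations, then the next digits inside those blocks, and so on --- and at each stage the ``tail sum'' values you allude to are exactly the rational combinations that make this work. Carrying out that recursion correctly, with the pigeonhole over primes stabilizing the entire tree of refinements and the lifts to integer nullspace vectors, is precisely the content of Rado's theorem for the pair-indexed matrix $C(\sigma)$; your sketch stops after the first layer.
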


\begin{proof}
Let $A \vect{x} = \vect{0}$ have a monochromatic coloring under $\varphi_p$ for
every prime $p$. Denote the columns of $A$ by $\{ \vect{a}_i \}_{i=1}^n$.

From $A$, we will make a larger matrix $C$ with columns indexed by
$\binom{[n]}{2} = \{ (i,j) \mid 1 \le i < j \le n \}$.
The columns of $C$ come from gluing the columns of $A$ (or the zero vector)
to new vectors which bind relationships between the columns of $A$.

\[
\vect{c}_{1j} = \left( \begin{array}{c}
\vect{a}_j \\
\hbox{---} \\
\vect{b}_{1j}
\end{array} \right),
\hbox{ and }
\vect{c}_{ij} = \left( \begin{array}{c}
\vect{0} \\
\hbox{---} \\
\vect{b}_{ij}
\end{array} \right)
\hbox{ if } i > 1,
\]
where
\[
b_{ij}(k,\ell) = \left\{ \begin{array}{ll}
 1 & \hbox{if } (k,\ell) = (1,j) \\
-1 & \hbox{if } (k,\ell) = (1,i) \hbox{ or } (i,j)\\
 0 & \hbox{otherwise.}
\end{array} \right.
\]

Note that the matrix $C$ does not explicitly contain the column
$\vect{a}_1$. However, since $\sum \vect{a}_i = \vect{0}$, that information
is not lost.

Suppose $C \vect{y} = \vect{0}$, with $y(1,j) = x(j) - x(1)$. 
The vectors $\{ b_{ij} \}$ are designed so that $y(i,j) = x(j) - x(i)$.

Turned around, when $A \vect{x} = \vect{0}$, and $\vect{y}$ is defined above,
we get $C \vect{y} = \vect{0}$. Likewise, if $C \vect{y} = \vect{0}$ then,
for any value $x(1)$, the values $x(i)$ are uniquely defined from
$\vect{y}$, and they satisfy $A \vect{x} = \vect{0}$.

We would like to say that, when $A \vect{x} = \vect{0}$ is a monochromatic
solution under the edge-coloring $\varphi_p$, the corresponding solution to
$C \vect{y} = \vect{0}$ is monochromatic under the vertex-coloring $\psi_p$.
However, this is not quite true.
The definition says $\varphi_p(x,y) = \psi_p(y-x)$ only when $x < y$.
For a monochromatic solution under $\psi_p$, we would need
$x(1) < x(2) < \ldots < x(n)$.
Instead, for each permutation $\sigma \in S_n$, we must define the matrix
$C(\sigma)$ which will ``work'' when
$x(\sigma(1)) < x(\sigma(2)) < \ldots < x(\sigma(n))$.
We omit the definition of $C(\sigma)$, but it is essentially the same as $C$,
defined in such a way that $y(i,j)$ is always a positive number
when $\vect{x}$ is ordered by $\sigma$.

If $\vect{x}$ is a solution to $A \vect{x} = \vect{0}$, with
$x(\sigma(1)) < x(\sigma(2)) < \ldots x(\sigma(n))$,
then there is a corresponding solution to $C(\sigma) \vect{y} = \vect{0}$
by positive numbers, where $y(i,j) = x(\sigma(j)) - x(\sigma(i))$.
When $\vect{x}$ is monochromatic under $\varphi_p$,
$\vect{y}$ is monochromatic under $\psi_p$.

Claim: some $C(\sigma)$ satisfies the columns condition.

Proof: If not, then Lemma~\ref{le:rado} says each $\sigma$, gives a value
$p_0(C(\sigma))$ so that, for $p > p_0(C(\sigma))$ prime, $C(\sigma)$ has no
monochromatic solutions under $\psi_p$.
Let $p_0 = \max_{\sigma \in S_n} \{ p_0(C(\sigma)) \}$. Take a prime $p > p_0$.
Since $A \vect{x} = \vect{0}$ has a monochromatic solution under $\varphi_p$,
there must be some $\sigma \in S_n$ so that
$C(\sigma)$ has a monochromatic solution under $\psi_p$.
As $p > p_0(C(\sigma))$, this is a contradiction.

\bigskip \noindent
Fix $\sigma$ so that $C(\sigma)$ satisfies the columns condition.
This means there are vectors
$\vect{z}_1, \ldots, \vect{z}_T$ in the nullspace of $C(\sigma)$,
and decreasing graphs $R_1 \supseteq \ldots \supseteq R_T = \emptyset$
satisfying:
\begin{enumerate}
\item If $i \in R_t$, then $z_s(i)=0$ for all $s < t$.
\item If $i \notin R_t$, then there is an $s \le t$ with $z_s(i) = 1$.
\item $R_T = \emptyset$.
\end{enumerate}
For simplicity, we reorder the columns of $A$ so that $\sigma$ is the
identity, and $C(\sigma)$ is the matrix $C$ described originally.

This means there are vectors $\vect{w}_1, \ldots, \vect{w}_T$
indexed by $\binom{[n]}{2}$,
and sets $R_1 \supseteq \ldots \supseteq R_T = \emptyset$ with vertex set
$\binom{[n]}{2}$ satisfying conditions (1)-(3) of Definition~\ref{de:CC}.

Define a sequence of vectors $\vect{z}_1, \ldots, \vect{z}_T$ on $[n]$
by $z_t(1) = 0$, and $z_t(i) = w_t(1,i)$ for $i > 1$.
Additionally define $\vect{z}_0 = \vect{1}$ and $R_0 = [n]$.
We just need $\{ \vect{z}_t \}, \{ R_t \}$ to satisfy
requirements (1)-(4) of the graph columns condition.

It will be helpful to know that, for $k < \ell$,
\begin{equation}\label{eq:difference}
z_t(\ell) - z_t(k) = w_t(k,\ell)
\end{equation}
To see this, consider the $\{k,\ell\}$ row of the vectors $\vect{b}_{ij}$ within $C$.
Since $C \vect{w} = \vect{0}$, inspecting this row tells us that
\[
w_t(1,\ell) - w_t(1,k) = w_t(k,\ell).
\]
By definition of $\vect{z}_t$, we see that
\[
z_t(\ell) - z_t(k) = w_t(k,\ell)
\]
as desired.

Using this equation, properties (1)-(3) are immediate. Property (4) comes from
the assumption that the columns of $A$ sum to $\vect{0}$.
\end{proof}

\begin{corollary}
If a matrix $A$ is graph-regular, then it satisfies the weak graph columns
condition.
\end{corollary}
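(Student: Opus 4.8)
The plan is to chain together the two results just established. Suppose $A\vect{x}=\vect{0}$ is graph-regular.

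First I would apply Lemma~\ref{le:sumtozero} (to each row of $A$, equivalently to $A$ itself): graph-regularity forces $\sum\vect{a}_i=\vect{0}$, so the columns of $A$ sum to the zero vector. This is precisely the standing hypothesis of Lemma~\ref{le:phip}, so all that remains is to verify the other hypothesis of that lemma, namely that $A\vect{x}=\vect{0}$ has a monochromatic solution under $\varphi_p$ for every prime $p$.

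For that step I would observe that, for each prime $p$, the map $\varphi_p\colon\binom{\N}{2}\to[p-1]$ is a finite edge-colouring of $K_\N$ --- it uses only the $p-1$ colours of $[p-1]$. By the definition of graph-regularity there is a threshold $N_A(p-1)$ so that, for every $N>N_A(p-1)$, the restriction of $\varphi_p$ to $K_N$ admits a solution $\vect{x}=(x(1),\dots,x(k))$ whose entries are distinct and whose edges are all assigned the same $\varphi_p$-colour. Hence such a monochromatic solution exists for every prime $p$, and Lemma~\ref{le:phip} then applies verbatim to give that $A$ satisfies the weak graph columns condition.

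I do not expect any genuine obstacle: this corollary is just the bookkeeping that packages the ``necessity'' half of Theorem~\ref{th:goal}, and all the substantive work --- the $g_p$-colouring argument behind Lemma~\ref{le:sumtozero} and the reduction through $\varphi_p$, $\psi_p$, and Rado's theorem behind Lemma~\ref{le:phip} --- has already been carried out. The one point worth a sentence of care is why the distinctness clause in the definition of graph-regular is needed rather than merely convenient: Lemma~\ref{le:phip} converts a monochromatic $\varphi_p$-solution of $A$ into a monochromatic $\psi_p$-solution of one of the auxiliary matrices $C(\sigma)$ by \emph{ordering} the values $x(i)$, and that ordering --- together with the choice of which $\sigma$ to use --- is available exactly because the $x(i)$ are pairwise distinct.
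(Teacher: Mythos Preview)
Your proposal is correct and follows exactly the paper's approach: invoke Lemma~\ref{le:sumtozero} to get $\sum\vect{a}_i=\vect{0}$, note that graph-regularity forces monochromatic solutions under every $\varphi_p$, and then apply Lemma~\ref{le:phip}. Your added remarks about why $\varphi_p$ is a finite colouring and why distinctness matters are accurate elaborations but not departures from the paper's argument.
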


\begin{proof}
From Lemma~\ref{le:sumtozero}, we know that the columns of $A$ sum to
$\vect{0}$. Since $A$ is graph-regular, it must have a monochromatic solution
under $\varphi_p$ for every prime $p$. By Lemma~\ref{le:phip},
$A$ satisfies the weak graph columns condition.
\end{proof}

We end this section by considering the sufficiency of the WGCC.

\begin{corollary}
If a matrix $A$ satisfies the weak graph columns condition but is not
graph-regular, then the offending coloring is {\it not} of the form
$\chi(x < y) = f(y-x)$.
\end{corollary}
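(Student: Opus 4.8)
The plan is to reduce to Rado's theorem through the auxiliary matrix $C(\sigma)$ built in the proof of Lemma~\ref{le:phip}. I will show that if $A$ satisfies the WGCC then every edge-coloring of $\binom{\N}{2}$ of the form $\chi(x<y)=f(y-x)$ with $f$ finite-valued admits a monochromatic distinct-valued solution of $A\vect{x}=\vect{0}$; by a compactness argument this says no difference coloring can witness failure of graph-regularity, which is the claim. The bridge is: if \emph{some} $C(\sigma)$ satisfies the ordinary columns condition, then Lemma~\ref{le:rado} produces a solution $\vect{y}\in\N^{\binom{[n]}{2}}$ of $C(\sigma)\vect{y}=\vect{0}$ with all coordinates the same $f$-color and all positive; the rows of $C(\sigma)$ force the $y(i,j)$ to be consistent pairwise differences, so they lift to $x(\sigma(1))<x(\sigma(2))<\dots<x(\sigma(n))$ with $A\vect{x}=\vect{0}$, and every difference $x(\sigma(j))-x(\sigma(i))=y(i,j)$ is of one $f$-color --- a $\chi$-monochromatic solution in distinct values.

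Everything thus reduces to showing: \emph{the WGCC for $A$ implies $C(\sigma)$ satisfies the columns condition for some $\sigma\in S_n$.} Take WGCC data $\vect{z}_0=\vect{1},\vect{z}_1,\dots,\vect{z}_T$ with $R_0\supseteq\dots\supseteq R_T=\emptyset$, normalized (which one may always arrange, each $R_t$ being a union of cliques) so that $\{p,q\}\in R_t$ exactly when $z_s(p)=z_s(q)$ for all $s<t$; then each pair $\{p,q\}$ has a well-defined \emph{separation time}, the first $t$ with $z_t(p)\neq z_t(q)$, and at that time the values differ by exactly $1$. Orient $p\to q$ when $z(q)$ exceeds $z(p)$ at the separation time of $\{p,q\}$. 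I claim this tournament on $[n]$ has no directed triangle; being triangle-free, a tournament is a linear order, so let $\sigma\colon([n],<)\to[n]$ be the order isomorphism onto that order. Transporting the data by $\sigma$ --- put $w_t(i,j)=z_t(\sigma(j))-z_t(\sigma(i))$ and let $S_t$ be $R_t$ relabelled --- the nullspace membership of the $\vect{w}_t$ and conditions (1), (3) of the columns condition are routine from the construction of $C(\sigma)$, while condition (2) holds precisely because $\sigma$ was chosen so that, when $(i,j)$ leaves $S_t$, the value $w_s(i,j)$ at the separation time equals $+1$, not merely $\pm1$.

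For the triangle-freeness, fix three indices $p,q,r$ with separation times $t_{pq},t_{qr},t_{pr}$. They are not all equal: at a common separation time $z(p),z(q),z(r)$ would be pairwise at distance $1$, impossible for integers. Nor can the minimum be attained only once: if, say, $t_{pq}$ were the unique minimum, then at time $t_{pq}$ one would still have $z(p)=z(r)$ and $z(q)=z(r)$ (those pairs not yet separated), forcing $z(p)=z(q)$ and contradicting that $t_{pq}$ is the separation time of $\{p,q\}$. Hence two of the pairs, say $\{p,q\}$ and $\{p,r\}$, share the minimum separation time $m$, strictly smaller than $t_{qr}$; at time $m$ we have $z_m(q)=z_m(r)$ (still together) while $z_m(p)$ differs from this common value, so the edges $\{p,q\}$ and $\{p,r\}$ are oriented the same way relative to $p$, and no directed $3$-cycle through $p,q,r$ can occur.

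The hard part, I expect, is not the reduction to Rado but this passage WGCC $\Rightarrow$ some $C(\sigma)$ satisfies the columns condition: obtaining a normalization of the WGCC witnesses under which each pair genuinely separates with difference exactly $1$ --- so that the orientation is well defined and supplies the coefficient $1$ that the columns condition insists on, rather than the $\pm1$ that the WGCC tolerates --- and then verifying that the ``no directed triangle'' argument goes through verbatim for that normalization. After that, the identification of $C(\sigma)$'s columns-condition data and the lift back to $A$ are bookkeeping along the lines already used in Section~\ref{se:necessary}.
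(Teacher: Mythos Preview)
Your approach is sound and is exactly what the paper gestures at without proving: the paper gives no argument for this corollary beyond the bare assertion that ``Lemma~\ref{le:phip} actually shows'' it, together with an explicit construction for the special coloring $\varphi_p$. What Lemma~\ref{le:phip} literally establishes is the implication ``some $C(\sigma)$ satisfies the columns condition $\Rightarrow$ $A$ satisfies WGCC''; the corollary needs the converse, and your tournament argument supplies it. Once some $C(\sigma)$ satisfies the columns condition, the positive half of Rado's theorem (Lemma~\ref{le:rado}) yields a monochromatic solution in $\N$ under \emph{any} finite $f$, and your lift back to a distinct-valued solution of $A\vect{x}=\vect{0}$ is correct (the consistency rows of $C(\sigma)$ force the $y(i,j)$ to be genuine positive differences, hence a strict ordering). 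So you are filling in precisely the step the paper leaves to the reader, by the route the paper implicitly intends; the acyclicity argument for the orientation is the real content and is clean.

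One caveat on the normalization. The claim ``at the separation time the values differ by exactly $1$'' does not follow from the WGCC exactly as written in Definition~\ref{de:GCC}, where condition (1) uses $s<t$. Under that literal reading one can produce WGCC witnesses in which some pair first disagrees with gap $2$, the required gap-$1$ disagreement arriving one step later; your orientation would then be ill-defined. The paper's own prose (``remain equal until they differ by exactly $1$'') and its worked examples make clear the intended reading is the stronger one, and the paper itself is inconsistent between $s<t$ and $s\le t$ (compare Definition~\ref{de:CC} with the restatement inside the proof of Lemma~\ref{le:phip}). Under the intended reading your argument is complete; it would be worth flagging the discrepancy when you write it up.
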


To see this in action, consider the coloring $\varphi_p$ and the matrix
$A$ from Example~\ref{ex:8vars}. Suppose that $1 < r < p^k-1$.
Consider the vector $\vect{x}$ in the nullspace of $A$ given by
\[
\vect{x} = (p^{k+2} + 1) \vect{z}_1 + (p^{k+1} + p) \vect{z}_2 + p^2 \vect{z}_3
\]
where $\{ vect{z}_i \}$ come from the analysis of this example earlier.
It is easy to check that $x(1) > x(2) > \ldots > x(8)$, and that $\varphi_p$
colors all edges by ``1''.
Indeed, Lemma~\ref{le:phip} actually shows that all colorings based on the
difference of the endpoints will yield a monochromatic solution.
Therefore, if the equation $A \vect{x} = \vect{0}$ is not graph-regular,
it must be from some other type of coloring.

\section{Sufficient conditions for graph-regularity}\label{se:sufficient}

We know that all graph-regular equations satisfy the weak graph columns
condition.
We now prove the strong graph columns condition. In order to do this,
we first define a large, hierarchical parametrized grid.

\begin{definition}
Fix $\vect{x}, \vect{y}, \vect{b}, \vect{d} \in \N^n$. We say the grid of
depth $n$ with parameters $\vect{x}, \vect{y}, \vect{b}, \vect{d}$
--- abbreviated $Grid_n(\vect{x}, \vect{y}, \vect{b}, \vect{d})$ ---
is the collection of points $A = \bigcup_{k=1}^n A_k$ where $A_k$ is the
set of points of the form
\[
\left( \sum_{\ell=1}^{k-1} h(\ell)d(\ell) + x(k)d(k) + i d(k+1),
       \sum_{\ell=1}^{k-1} h(\ell)d(\ell) + y(k)d(k) + j d(k+1) \right)
\]
such that $h(\ell) \in \{x(\ell), y(\ell)\}$ and $i,j \in [-c(k), c(k)]$
for some $c(k) \ge b(k)$. As it is not earlier defined, we use $d(n+1)=0$.

For such a grid to behave nicely, we always require the parameters to
satisfy:
\begin{enumerate}
\item $d(k) \divides d(k+1)$,
\item $x(k)b(k), y(k)b(k) \le c(k-1)$.
\end{enumerate}
\end{definition}

As a technical note, the use of $c(k) > b(k)$ comes from requirement (2).

We say that a $Grid_n(\vect{x}, \vect{y}, \vect{b}, \vect{d})$
is ``proper'' if (1) all $x$-coordinates of each of its points is less than
all $y$-coordinates, and (2) each coordinate has a unique
representation of the form
\[
\sum_{\ell=1}^{k} h(\ell)d(\ell) + i d(k+1)
\]
over all values of $k \in [n]$, $h(\ell) \in \{x(\ell), y(\ell)\}$,
and $i, j \in [-c(k), c(k)]$. We may thus unambiguously say that a point
``resides at the $k^{th}$ level'' of the grid if its coordinates agree on
$h(\ell)$ for all $\ell < k$, but disagree on $h(k)$.

For convenience of notation, we will treat a proper $Grid_n$ as a graph,
since it uniquely stores pairs $\{x, y\}$.

The proof of Theorem 3.1 in \cite{parrish} may be easily modified to give
the following lemma.

\begin{lemma}\label{le:monogrid}
Fix $\vect{b} \in \N^n$. There is a number $Q = Q(r, \vect{b})$ so that
every $r$-coloring of $[Q]^2$ admits vectors
$\vect{x}, \vect{y}, \vect{d} \in \N^n$ such that
$Grid_n(\vect{x}, \vect{y}, \vect{b}, \vect{d})$ is proper and monochromatic.

Moreover, the dependencies are such that the value of $b(k)$ may be a function
of upper bounds for $x(k+1), y(k+1), c(k+1),$ and $\frac{d(k+1)}{d(k)}$.
\end{lemma}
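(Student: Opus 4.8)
The plan is to prove this by induction on the depth $n$, adapting the color-focusing argument (the ``van der Waerden engine'') that underlies Theorem~3.1 of \cite{parrish}. The base cases $n \le 1$ are immediate: a depth-$0$ grid is empty, and a depth-$1$ grid stores only the single edge $(x(1)d(1), y(1)d(1))$ since the wiggles vanish when $d(n+1) = 0$, so $Q = 2$ suffices. For the inductive step, assume the full statement --- including the ``moreover'' clause --- for depth $n-1$. To build a proper monochromatic depth-$n$ grid I would peel off one level: the levels $1, \dots, n-1$ of the target form a depth-$(n-1)$ grid carrying the data $(x(k), y(k), b(k), d(k))_{k<n}$, and the remaining level is attached to it by a single Hales--Jewett/Gallai--Witt-type focusing step.

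The focusing step is the heart of the matter, and it is genuinely two-dimensional, since it acts on edges rather than points. Working inside a large auxiliary box $[Q]^2$, one lays down a long arithmetic-progression-like scaffold of candidate translates; inside each translate the inductive hypothesis produces a proper monochromatic depth-$(n-1)$ grid. Applying a product-Ramsey step to the auxiliary coloring that records, for each translate, both its color and the combinatorial type of the sub-grid found there, one extracts a monochromatic configuration of sub-grids that are aligned (identical combinatorial type, translated copies of one another) and share a single color $c$. The new level must then be placed so that (i) the old sub-grid reappears under the two offsets corresponding to $x(\cdot)$ and $y(\cdot)$ at that level, as copies of the \emph{same} configuration, each monochromatic in color $c$; (ii) the new cross-edges $\bigl(\,\cdot + i\,d,\ \cdot + j\,d\,\bigr)$, with $i, j$ ranging over a width at least $b(\cdot)$, also receive color $c$; and (iii) the parameters obey the divisibility requirement $d(k) \divides d(k+1)$ and the size requirements $x(k)b(k), y(k)b(k) \le c(k-1)$, with every coordinate retaining a unique representation. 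Conditions (i) and (ii) are delivered by the scaffold argument; for (iii), choosing consecutive $d$'s in the correct ratio --- a large multiple, exceeding all widths $c(\cdot)$ already committed --- forces a unique mixed-radix expansion of every coordinate, which is precisely what ``proper'' demands, while the size conditions are met by simply enlarging the relevant width $c$, which is only ever required to be at least the corresponding $b$.

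The ``moreover'' clause then falls out of the bookkeeping: the construction fixes the parameters of level $k+1$ (and of all deeper levels) using only $r$ and $b(k+1), \dots, b(n)$, strictly before $b(k)$ has any effect on the bounds, so the dependence of $Q$ on $b(k)$ can be deferred until upper bounds for $x(k+1), y(k+1), c(k+1)$, and $d(k+1)/d(k)$ are in hand --- exactly as claimed. I expect the main obstacle to be carrying out the focusing step in a single stroke: one must simultaneously align the combinatorial types of the many sub-grids, collapse their colors (and that of the freshly added cross-edges) to one, and honor the properness conditions, all while keeping the quantitative dependencies tight enough that $b(k)$ never leaks into the level-$(>k)$ bounds and no circularity arises. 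This is precisely the point at which the argument of \cite[Theorem~3.1]{parrish} must be upgraded from its original setting to the hierarchical parametrized grid defined above; the remaining verifications are routine.
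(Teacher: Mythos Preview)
The paper does not actually supply a proof of this lemma: immediately before the statement it says only that ``the proof of Theorem~3.1 in \cite{parrish} may be easily modified to give the following lemma,'' and nothing further is offered. Your proposal is therefore not competing with a proof in the paper but rather filling in what the paper leaves to the reader, and it does so along the same lines the author intends --- induction on the depth, with a product-Ramsey/color-focusing step of the Hales--Jewett type at each stage, exactly the engine behind \cite[Theorem~3.1]{parrish}. The sketch is reasonable and the bookkeeping you describe for the ``moreover'' clause (fixing the deeper-level parameters before $b(k)$ enters) matches how such arguments are organized; the paper itself relies on precisely this order of dependencies in the subsequent Lemma~\ref{le:solningrid}. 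There is nothing to correct or contrast here.
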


This lemma tells that we can always find a ``large'' monochromatic
$Grid_n$.

\subsection{A $Grid_n$ is enough}

Since we know every finite-coloring of $[Q] \times [Q]$ contains a
large monochromatic $Grid_n$ (for $Q$ sufficiently large),
we only need to show the following.

\begin{lemma}\label{le:solningrid}
Let $A$ satisfy the strong graph columns condition. Then there is some $n$,
$\vect{b}$ so that the following holds. For every every proper
$G = Grid_n(\vect{x}, \vect{y}, \vect{b}, \vect{d})$, there is a solution to
$A \vect{w} = \vect{0}$ so that, for all $i,j$, the edge
$\{w(i), w(j)\}$ is in $G$.

In particular, if $A$ satisfies the strong graph columns condition
in $T$ steps, then we may take $n = T$.
\end{lemma}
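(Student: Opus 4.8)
The plan is to take the data furnished by the strong graph columns condition — vectors $\vect{z}_0 = \vect{1}, \vect{z}_1, \ldots, \vect{z}_T$ in the nullspace of $A$, together with the decreasing sequence of restriction graphs $R_0 \supseteq \cdots \supseteq R_T = \emptyset$ satisfying $(1^*)$, $(2^*)$, (3), (4) — and use them to build an explicit solution $\vect{w}$ to $A\vect{w} = \vect{0}$ whose coordinates sit inside an arbitrary proper $Grid_T(\vect{x}, \vect{y}, \vect{b}, \vect{d})$. The natural ansatz is $\vect{w} = \sum_{t=1}^{T} \lambda_t \vect{z}_t$ for suitable scalars $\lambda_t \in \N$ (and an additive shift to land in the grid's ambient coordinates); since each $\vect{z}_t$ is in the nullspace, any such combination automatically satisfies $A\vect{w} = \vect{0}$, so the whole burden is placed on the \emph{geometric} requirement that every edge $\{w(i), w(j)\}$ lie in $G$.

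**Matching the grid structure to the restriction graphs.** The key observation is that the grid is organized into $T$ levels, and a pair of grid points with coordinates agreeing through level $k-1$ but differing at level $k$ forms an edge of $G$ precisely when their level-$k$ "choice coordinates" are $x(k)$ versus $y(k)$ — that is, a $0$/$1$-type distinction, amplified by the scale $d(k)$ and perturbable within the window $[-c(k), c(k)]$. This is exactly the shape of condition $(1^*)$–$(2^*)$: for a restricted pair $\{i,j\} \in R_t$ we have $z_s(i) = z_s(j) \in \{0,1\}$ for $s < t$, and for an unrestricted pair the \emph{first} time it becomes unrestricted, say at step $t$, there is an $s \le t$ with $\{z_s(i), z_s(j)\} = \{0,1\}$. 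So I would set $\lambda_t = d(t) \cdot (\text{a unit determined by } x(t), y(t))$, roughly $w(i) = \sum_{t} (\text{level-}t\text{ contribution})$ where the level-$t$ contribution of coordinate $i$ is $x(t)d(t)$ if $z_t(i) = 0$ and $y(t)d(t)$ if $z_t(i) = 1$, while the arbitrary integer coefficients appearing in $\vect{z}_t$ beyond step $t$ (the "unrestricted after that" values, which under SGCC are genuine integers, not just $0/1$) get absorbed into the free window $[-c(t), c(t)]$ of level $t$. This is where the hypothesis that $b(k)$ — hence $c(k)$ — may depend on upper bounds for $x(k+1), y(k+1), c(k+1), d(k+1)/d(k)$ becomes essential: one chooses $\vect{b}$ in decreasing order of level so that the window at level $k$ is wide enough to hold all the "lower-order" contributions coming from $\vect{z}_{k+1}, \ldots, \vect{z}_T$ scaled by the corresponding coefficients.

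**Carrying it out.** Concretely I would proceed in the following steps. First, fix $n = T$ and, working from $k = T$ down to $k = 1$, choose the threshold vector $\vect{b}$ so that $b(k)$ dominates the total magnitude of all contributions that coordinates of $\vect{w}$ can pick up at levels $> k$ (this is a finite bound depending only on $A$ and the entries of the $\vect{z}_t$'s, fed through the allowed dependency in Lemma~\ref{le:monogrid}). Second, given a proper $Grid_T(\vect{x}, \vect{y}, \vect{b}, \vect{d})$, define $w(i) = \sum_{t=1}^{T} h_t(i)\, d(t)$ where $h_t(i) = x(t)$ if $z_t(i) = 0$ and $h_t(i) = y(t)$ if $z_t(i) = 1$, plus correction terms $\delta_t(i) d(t+1)$ that encode the non-$0/1$ part of the linear combination so that $A\vect{w} = \vect{0}$ holds exactly; one checks $|\delta_t(i)| \le c(t)$ using the choice of $\vect{b}$. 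Third, verify that for every pair $i, j$, writing $t$ for the first step at which $\{i,j\}$ leaves the restriction graph: by $(1^*)$ the coordinates $w(i), w(j)$ agree on all levels $< t$, and by $(2^*)$ they differ at level $t$ with one taking the "$x$-choice" and the other the "$y$-choice", so $\{w(i), w(j)\}$ is an edge of the level-$t$ slice $A_t$ of $G$ — and since the perturbations stay in the legal window, it is a genuine edge of $G$. Fourth, invoke properness of the grid to ensure these representations are unambiguous so the edge really lies in $G$ and the $w(i)$ are distinct. The main obstacle I anticipate is the bookkeeping in step two: translating "the coefficient of $\vect{z}_t$ in the nullspace combination" and "the residual integers $z_s(i)$ for $s > t$" into contributions that fit cleanly into the grid's level structure without collisions between levels — in particular making sure that the $c(k)$-windows, whose sizes are constrained by requirement (2) of the grid definition ($x(k)b(k), y(k)b(k) \le c(k-1)$), are simultaneously large enough to hold lower-level data and small enough to satisfy that inequality. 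Getting the order of quantifier choices right (choose $\vect{b}$ first depending only on $A$; then the grid is handed to us; then read off $\vect{w}$) is what makes Lemma~\ref{le:monogrid}'s dependency clause exactly match what we need.
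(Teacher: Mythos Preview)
Your proposal is correct and follows essentially the same approach as the paper: form $\vect{w}$ as a linear combination of the nullspace vectors $\vect{z}_t$ (with an additive shift by a multiple of $\vect{z}_0=\vect{1}$), so that the $0/1$ entries of $\vect{z}_t$ translate into the $x(t)/y(t)$ choices at level $t$ of the grid, while the unrestricted entries get absorbed into the $[-c(t),c(t)]$ window via a top-down choice of $\vect{b}$. The paper makes this explicit by setting $\vect{v}_t = x(t)d(t)\vect{1} + (y(t)-x(t))d(t)\vect{z}_t$ and $\vect{w}=\sum_t \vect{v}_t$; your ``$h_t(i)d(t)$ plus correction terms $\delta_t(i)d(t+1)$'' is just a regrouping of the same expression, though writing it directly as a nullspace combination (as you do in your opening paragraph) avoids any ambiguity about why $A\vect{w}=\vect{0}$.
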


\begin{proof}
Let $A$ satisfy the columns condition, by vectors
$\vect{z}_0=\vect{1}, \vect{z}_1, \ldots, \vect{z}_T$ and graphs
$R_0 \supseteq \ldots \supseteq R_T = \emptyset$.

Fix $\vect{x}, \vect{y}, \vect{d} \in \N^T$. Define a sequence of vectors by
\[
\vect{v}_t = x(t)d(t) \vect{z}_0 + (y(t) - x(t))d(t) \vect{z}_t
           = x(t)d(t) \vect{1}   + (y(t) - x(t))d(t) \vect{z}_t,
\]
each in the nullspace of $A$.

Define $\vect{w} = \sum_{t=1}^{T} \vect{v}_t$. As a sum of
vectors in the nullspace of $A$, we have $A \vect{w} = \vect{0}$.
We claim that this is the desired solution.
It remains to show show that (1) every edge $\{w(i), w(j)\}$ is in $G$, and
(2) the values $w(i)$ are distinct.

Fix two indices $i$ and $j$. By the strong columns condition, we know that
the values $z_t(i)$ and $z_t(j)$ are initially equal --- with common value
0 or 1 --- when $\{i, j\} \in R_t$. There is a first time $t^*$ such that
(without loss of generality) $z_{t^*}(i)=0$ and $z_{t^*}(j)=1$.

Moving to $\vect{w}$, we see that
$z_t(i)=0$ contributes $x(t)$ to $w(i)$, and 
$z_t(i)=1$ contributes $y(t)$ to $w(i)$.
Since $v_t(i)$ and $v_t(j)$ agree for $t < t^*$, we may call the common
contribution at the $t$ step $h(t) = x(t)$ or $y(t)$. At time $t^*$,
$v(i) = x(t^*)$ while $v(j) = y(t^*)$. This suggests the edge
$\{ w(i), w(j) \}$ should reside at the $(t^*)^{th}$ level of $G$.
For $t > t^*$, we have
\[
\renewcommand{\arraystretch}{1.5}
\begin{array}{rcll}
|v_t(i)| &  =  & | x(t)d(t) + (y(t) - x(t))d(t)z_t(i) | \\
         &  =  & | x(t) + (y(t) - x(t))z_t(i) | d(t) \\
         & \le & | y(t) + y(t) z_t(i) | d(t)
                   & \hbox{since } 0 \le x(t) \le y(t) \\
         & \le & \left( \|z_t\|_{\infty} + 1 \right) y(t) d(t) \\
\end{array}
\renewcommand{\arraystretch}{1}
\]
Thus we see the total contribution to $w(i)$ from the tail of the sum comes to
\[
\renewcommand{\arraystretch}{1.5}
\begin{array}{rcll}
\displaystyle \left| \sum_{s > t^*} v_t(i)\right |
    & \le & \displaystyle \sum_{s > t^*} \left( \|z_s\|_{\infty} + 1 \right) y(s) d(s) \\
    & \le & \left( \|z_{t^*+1}\|_{\infty} + 1 \right) y(t^*+1) d(t^*+1) +
            \ldots + \\
    &     & + \left( \|z_{T-1}\|_{\infty} + 1 \right) y(T-1) d(T-1) + 
            \left( \|z_{T}\|_{\infty} + 1 \right) y(T) d(T)\\
\end{array}
\renewcommand{\arraystretch}{1}
\]
We now begin to see an appropriate choice of $\vect{b}$. Set
\[
b(T) \ge \|z_{T}\|_{\infty} + 1.
\]
The last term of our bound above becomes
$b(T) y(T) d(T)$, which by assumption is less than $c(T-1) d(T)$.

Next take
\[
b(T-1) \ge \left( \|z_{T-1}\|_{\infty} + 1 \right) y(T-1) +
           c(T-1) \left( \frac{d(T)}{d(T-1)} \right).
\]

Now the last two terms of the sum are bounded by $c(T-2) d(T-1)$.

We may continue this process so that, for $t > t^*$, we have
\[
b(t) \ge \left( \|z_{t}\|_{\infty} + 1 \right) y(t) +
         c(t) \left( \frac{d(t+1)}{d(t)} \right).
\]

Working backwards to step $t^* + 1$, we get
\[
\displaystyle \left| \sum_{t > t^*} v_t(i)\right | \le c(t^*) d(t^* + 1).
\]

We have written
\[
\renewcommand{\arraystretch}{1.5}
\begin{array}{rcll}
w(i) & = & h(1)d(1) + \ldots + h(t^* - 1) d(t^* - 1) + x(t^*) d(t^*) +
           p d(t^* + 1) \\
w(j) & = & h(1)d(1) + \ldots + h(t^* - 1) d(t^* - 1) + y(t^*) d(t^*) +
           q d(t^* + 1) \\
\end{array}
\renewcommand{\arraystretch}{1}
\]
with $p, q \in [-c(t^*), c(t^*)]$. Thus we see $\{w(i), w(j)\}$ is indeed
a point in $G$, in the $(t^*)^{th}$ level. 

To wrap up this argument, we need only take $b(t)$ to be the maximum of the
lower bounds seen, over all choices of $i$ and $j$. 

Finally, showing the points are distinct is simple. Since
$z_{t^*}(i) = 0$ and $z_{t^*}(j) = 1$, we see that $w(i)$ involves $x(t^*)$,
while $w(j)$ involves $y(t^*)$. Since the grid is proper, the values $w(i)$
and $w(j)$ must be distinct.
\end{proof}

\begin{corollary}
Let $A$ satisfy the strong graph columns condition.
Then $A \vect{x} = \vect{0}$ is graph-regular.
\end{corollary}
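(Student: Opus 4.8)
The plan is to assemble the corollary from Lemma~\ref{le:solningrid} and Lemma~\ref{le:monogrid}. Suppose $A$ satisfies the strong graph columns condition, in $T$ steps. Lemma~\ref{le:solningrid} then supplies a depth $n=T$ and a parameter vector $\vect{b}$ such that, for every proper $Grid_T(\vect{x},\vect{y},\vect{b},\vect{d})$, there is a solution $\vect{w}$ of $A\vect{w}=\vect{0}$ all of whose edges $\{w(i),w(j)\}$ lie in the grid. Feeding this $\vect{b}$ to Lemma~\ref{le:monogrid} yields a threshold $Q=Q(r,\vect{b})$, and I would set $N_A(r):=Q(r,\vect{b})$.

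It then remains to convert an edge-coloring of a complete graph into a coloring of a square grid. Fix $r$, fix $N>N_A(r)$, and let $\chi$ be any $r$-coloring of $\binom{[N]}{2}$. Since $Q\le N$, define an $r$-coloring $\chi'$ of $[Q]^2$ by $\chi'(a,b)=\chi(\{a,b\})$ for $a\ne b$ (the diagonal may be colored arbitrarily; it is never inspected, because a proper grid uses only points $(a,b)$ with $a<b$). By Lemma~\ref{le:monogrid}, $\chi'$ admits a proper, $\chi'$-monochromatic $G=Grid_T(\vect{x},\vect{y},\vect{b},\vect{d})$. Every point $(a,b)$ of $G$ satisfies $a<b$ with $a,b\in[Q]\subseteq[N]$, so it corresponds to a genuine edge $\{a,b\}$ of $K_{[N]}$, and all of these edges get one common color $c$ under $\chi$.

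Finally I would apply Lemma~\ref{le:solningrid} to $G$: it produces $\vect{w}$ with $A\vect{w}=\vect{0}$ and $\{w(i),w(j)\}\in G$ for all $i,j$. Each such pair is then a $\chi$-edge of color $c$, giving condition (1) of graph-regularity; and since $G$, regarded as a graph, has no loops, $\{w(i),w(j)\}\in G$ forces $w(i)\ne w(j)$, giving condition (2). As $r$ and $N>N_A(r)$ were arbitrary, $A\vect{x}=\vect{0}$ is graph-regular.

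There is no deep obstacle here, since the substance lives in the two lemmas, but one point genuinely needs care: the interplay between the $\vect{b}$ demanded by Lemma~\ref{le:solningrid} and the grid data $\vect{x},\vect{y},\vect{d}$ that Lemma~\ref{le:monogrid} returns. The lower bounds on the entries $b(t)$ that appear in the proof of Lemma~\ref{le:solningrid} are themselves phrased in terms of the grid's own quantities (the $y(t)$, the $c(t)$, and the ratios $d(t+1)/d(t)$), so $\vect{b}$ cannot simply be chosen in advance of the grid. This apparent circularity is exactly what the ``Moreover'' clause of Lemma~\ref{le:monogrid} resolves: one determines the grid data level by level from $t=T$ downward, at each level picking $b(t)$ large enough for Lemma~\ref{le:solningrid} from the already-fixed higher-level bounds, so that by the time Lemma~\ref{le:monogrid} is invoked the vector $\vect{b}$ is a legitimate fixed input. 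Everything else is routine bookkeeping.
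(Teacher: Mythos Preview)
Your proof is correct and follows essentially the same approach as the paper: obtain $\vect{b}$ from Lemma~\ref{le:solningrid}, feed it to Lemma~\ref{le:monogrid} to get $Q$, view the edge-coloring as a coloring of $[Q]^2$ minus the diagonal, extract a proper monochromatic $Grid_T$, and apply Lemma~\ref{le:solningrid} to it. Your final paragraph about the apparent circularity in choosing $\vect{b}$ and its resolution via the ``Moreover'' clause is a welcome clarification that the paper's proof leaves implicit.
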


\begin{proof}
Let $\vect{c}$ be the vector given in Lemma~\ref{le:solningrid},
and let $r \in \N$ be a number of colors.
We claim that, if $Q \ge Q(r, \vect{c})$ from Lemma~\ref{le:monogrid},
then any $r$-coloring of $\binom{[Q]}{2}$ will contain a solution to
$A \vect{x} = \vect{0}$ so that the values $\{x(i)\}$ are distinct,
and the edges $\{ x(i), x(j) \}$ are monochromatic.

Indeed, by Lemma~\ref{le:monogrid}, viewing $\chi$ as an $r$-coloring of
$[Q] \times [Q]$ (minus the diagonal), we find a monochromatic $Grid_n$
of distinct points, with size at least $\vect{c}$.
By Lemma~\ref{le:solningrid}, this $Grid_n$ contains a solution to
$A \vect{x} = \vect{0}$ as desired.
\end{proof}

\section{Hypergraph-regular equations}

There is a natural extension of graph-regularity to the hypergraph Ramsey
theorem.

Unfortunately, this extension is not fruitful.
Say a homogeneous linear equation is ``$r$-graph-regular'' if, for every
coloring of the $r$-sets of $\N$, it has a monochromatic solution by
distinct numbers. As with graphs, when considering an $r$-uniform hypergraph,
we require the equations to have at least $r+1$ variables, or else every
solution will be trivially monochromatic.

\begin{theorem}
For $r \ge 3$, no homogeneous linear equation of at least $r+1$ variables
is $r$-graph-regular for $r$-uniform hypergraphs.
\end{theorem}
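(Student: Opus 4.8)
The plan is to exploit the extra ``room'' available in an $r$-uniform hypergraph for $r\ge 3$: there is an edge-coloring in which every monochromatic clique is forced to represent a single congruence class modulo $n$, \emph{and} this holds for every $n$ simultaneously in a way that kills every nontrivial homogeneous linear equation. The key observation is that an $r$-set $\{v_1<\cdots<v_r\}$ carries $r-1\ge 2$ independent ``gaps'' $v_2-v_1,\ v_3-v_2,\dots$, so we can color by a quantity that depends on the \emph{differences between consecutive gaps} rather than on any single difference. This is exactly the structure that blocked three-variable equations in the graph case (Corollary following Theorem~\ref{th:partition}), now available unconditionally.

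First I would fix a prime $p$ and define, for an $r$-set $e=\{v_1<\cdots<v_r\}$, the color $\chi_p(e)$ to record (in $\Z/n\Z$ for a suitable $n$, or via a ``super mod $p$'' value) the $p$-adic valuation of $v_2-v_1$, the $p$-adic valuation of $v_3-v_2$, and whether these agree. Since $r\ge 3$ there are at least two consecutive gaps, so this is well-defined. The point is the analogue of the $f_n$ argument in Section~\ref{se:sumtozero}: in a monochromatic clique on vertices $w_1<\cdots<w_k$, consider any three consecutive vertices; monochromaticity of all $r$-subsets containing them forces the gap $w_{i+1}-w_i$ to have a valuation that is constant along the clique, and moreover forces $w_2-w_1=w_3-w_2=\cdots$, i.e.\ the clique is an arithmetic progression. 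Then I would compose with the ``multiply the common difference'' trick of $g_p$ and $\psi_p$: rescale so that the common difference itself, rather than being an arbitrary multiple of $p^j$, is colored by its super-mod-$p$ residue. The upshot is that a monochromatic clique is an arithmetic progression $w_i=a+(i-1)\delta$ whose common difference $\delta$ lies in a fixed residue class mod $p$ (after factoring out powers of $p$).

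Next, feed this into the equation. If $\sum_{i=1}^k a_i x_i=0$ has a monochromatic solution by distinct values, those values form an arithmetic progression, so $x_i=a+m_i\delta$ for distinct integers $m_i$, giving $(\sum a_i)a+(\sum a_i m_i)\delta=0$. If $\sum a_i\ne 0$ this already restricts $\delta$ (it is determined up to the valuation business), and running over all primes $p$ as in Lemma~\ref{le:sumtozero} forces $\sum a_i=0$; then we are left with $(\sum a_i m_i)\delta=0$, and since $\delta\ne 0$ we need $\sum a_i m_i=0$ for \emph{some} choice of distinct integers $m_i$ — but by choosing the coloring to additionally pin down the $m_i$ (e.g.\ forcing consecutive vertices of the clique, so $\{m_i\}$ is an interval, or more cheaply forcing $\delta$ huge relative to $\sum|a_i|$ via a valuation-based coloring as in the four-variable corollary), this fails. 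The cleanest route: take $n$-colorings based on $p_1$-adic valuations of consecutive gaps, exactly mirroring the coloring $\chi$ in the proof that $w-x+y-z=0$ stands alone, and conclude no equation survives.

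The main obstacle I anticipate is the bookkeeping for ``consecutive three vertices of a monochromatic clique force an arithmetic progression.'' One must be careful that an $r$-set containing $w_i,w_{i+1},w_{i+2}$ also contains $r-3$ other clique vertices, and the coloring must be defined so that its value is insensitive to those extra vertices while still detecting the relation among the three consecutive gaps — this is why we need $r\ge 3$ (so there \emph{are} three consecutive vertices inside a single edge) and why the case $r=2$ genuinely differs. I would isolate this as a lemma: \emph{under $\chi_p$, every monochromatic clique on $\ge r+1$ vertices is an arithmetic progression whose common difference, divided by its largest power of $p$, is $\equiv c \pmod p$ for the relevant color $c$} — and the rest is the now-routine divisibility argument of Section~\ref{se:sumtozero} applied to $\sum a_i=0$ together with $\sum a_i m_i=0$.
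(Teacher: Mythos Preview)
Your proposal has a genuine gap at the decisive step. Suppose your coloring really does force every monochromatic clique to be an arithmetic progression $w_i=a+m_i\delta$. After establishing $\sum a_i=0$, you reduce to $\sum a_i m_i=0$ for distinct integers $m_i$, and then assert that a further refinement of the coloring can ``pin down the $m_i$'' so that this fails. But it cannot: Ramsey's theorem for $r$-uniform hypergraphs guarantees arbitrarily long monochromatic cliques under any finite coloring, so your arithmetic progression will have as many terms as you like, and the equation $\sum a_i m_i=0$ then has solutions whenever it is partition-regular in Rado's sense (indeed, whenever it has any solution by distinct integers at all). Concretely, take $r=3$, $k=4$, and the equation $w-x+y-z=0$. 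Any four-term arithmetic progression $a,a+\delta,a+2\delta,a+3\delta$ solves it via $w=a$, $x=a+\delta$, $y=a+3\delta$, $z=a+2\delta$. Your suggestion to force the $m_i$ to be an interval does not help (the same example works with $\{m_i\}=\{0,1,2,3\}$), and making $\delta$ large is irrelevant since $\delta$ does not appear in $\sum a_i m_i$. So ``monochromatic cliques are arithmetic progressions'' is simply not enough structure to kill all homogeneous linear equations.

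The paper's argument avoids this by a different reduction. After using an $r$-uniform analogue of $g_p$ to force $\sum a_i=0$, it defines (for $r=3$) the coloring $h_p(x<y<z)=g_p(y-x,\,z-x)$, pulling back the \emph{graph} coloring $g_p$ along the map to differences from the least element. If $x_1,\dots,x_k$ is a monochromatic solution with $x_j$ smallest, then $\{x_i-x_j\}_{i\neq j}$ is a monochromatic clique under $g_p$, and the analysis of $g_p$ from Section~\ref{se:sumtozero} applies verbatim to $\sum_{i\neq j}a_i(x_i-x_j)=0$: a red clique forces some $a_i=0$, a blue clique forces $\sum_{i\neq j}a_i=0$ and hence $a_j=0$. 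For general $r\ge 3$ one bootstraps, building $h_p$ on top of $g_p^{(r-1)}$. The point is that the reduction lands back in a setting where \emph{no} equation survives (the three-variable graph case), rather than in arithmetic progressions where many equations do.
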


\begin{proof}
We show the result for $r = 3$, and suggest the appropriate modifications
for higher $r$.

Assume each $a_i$ is nonzero, since discarding trivial variables only makes
it easier to be graph-regular.

For any $n$, define an $(n+1)$-coloring $f_n^{(3)}$ of $\binom{\N}{r}$ by
\[
f_n^{(3)}(an+x,bn+y,cn+z) = \left\{
\begin{array}{ll}
\hbox{blue}  & \hbox{if } x=y=z \\
\min \{x,y,z\} & \hbox{if one of } {x,y,z} \hbox{ is smallest} \\
\max \{x,y,z\} & \hbox{otherwise,} \\
\end{array}
\right.
\]
where $x,y,z \in \{0, 1, \ldots, n-1\}$. Similar to before, any set of four
elements which is monochromatic under this coloring must be blue.

Now, for a prime $p$, define $g_p^{(3)}$ on $\binom{\N}{r}$ by
\[
g_p^{(3)}(p^i a, p^j b, p^k c) = \left\{
\begin{array}{ll}
f_{p-1}^{(3)}(a,b,c) & \hbox{if } i=j=k \\
\hbox{red}     & \hbox{if one of } {i,j,k} \hbox{ is smallest} \\
\hbox{green}   & \hbox{otherwise,} \\
\end{array}
\right.
\]
where $a,b,c$ are not divisible by $p$. Again, similar to before, any
monochromatic clique under this coloring on at least four points must
be red or blue. The proof of Lemma~\ref{le:sumtozero} now applies unchanged
to show that the coefficients of a hypergraph-regular equation must sum
to zero.

Therefore, we only consider $\sum_{i=1}^k a_i x_i = 0$ where
$\sum a_i = 0$.

Define a new coloring, $h_p(x,y,z) = g_p(y-x,z-x)$, where
$x < y < z$, and $g_p$ is the graph-coloring used in
Section~\ref{se:sumtozero}.

Suppose $x_1, \ldots, x_k$ are distinct values satisfying
$\sum a_i x_i = 0$, with the hyperedges among them monochromatic ---
either red or blue.
Let $x_j$ be the smallest of these values. Since
$a_j = -\sum_{i \neq j} a_i$, we see that
\[
\sum_{i \neq j} a_i (x_i - x_j) = 0.
\]
By choice of $x_j$, we see that $\{x_i - x_j\}_{i \neq j}$ is monochromatic
under $g_p$. As before, a red clique means some $a_i$ is 0.
If the clique is blue, then $\sum_{i \neq j} a_i = 0$, meaning $a_j = 0$.
Since none of the coefficients are 0, we have reached a contradiction.
Thus no homogeneous linear equation in at least 4 variables is
hypergraph-regular under colorings of 3-sets.

\bigskip
\noindent
For a general $r$-uniform hypergraph with $r > 3$, one can easily modify
the definition of $g_p^{(3)}$ to find a suitable $g_p^{(r)}$,
which will force coefficients to add to zero.
Likewise, one may define a coloring similar to $h_p$ which is built
upon $g_p^{(r-1)}$, which will force one of the coefficients to
be zero. These two colorings together will avoid solutions to
any equation in at least $r+1$ variables.
\end{proof}

Evidently, the ability to color 3-sets (or higher) of integers is too strong
to permit monochromatic solutions to homogeneous linear equations.
Is there a better definition for an equation to be $r$-graph-regular which
allows some equations to meet it, or is this the end of the story?


\begin{thebibliography}{99}

\bibitem{gunder1}
  W. Deuber, D. S. Gunderson, N. Hindman, D. Strauss,
  {\it Independent finite sums for $K_m$-free graphs},
  J. Combin. Theory Ser. A 78 (1997), no. 2, 171-198.

\bibitem{GKP}
  W. Gasarch, C. Kruskal, A. Parrish, {\it Van der Waerden's theorem
  and its variants}, to appear.

\bibitem{GRS}
  R.L. Graham, B.L. Rothschild, J. Spencer, {\it Ramsey Theory},
  John Wiley \& Sons Inc., New York, second edition 1990.

\bibitem{gunder2}
  D. S. Gunderson, I. Leader, H. J. Pr\"omel, and V. R\"odl,
  {\it Independent arithmetic progressions in clique-free graphs
  on the natural numbers}, J. Combin. Th. Ser. A 93 (2001), 1-17.

\bibitem{gunder3}
  D. S. Gunderson, I. Leader, H. J. Pr\"omel, and V. R\"odl,
  {\it Independent Deuber sets in graphs on the natural numbers},
  J. Combin. Th. Ser. A 103 (2003), 305-322.

%

\bibitem{parrish}
  A. Parrish, {\it An additive version of Ramsey's theorem} (submitted).

\bibitem{rado}
  R. Rado, {\it Studien zur Kombinatorik}, Math. Zeit. 36 (1933), 242-280.

\bibitem{ramsey}
  F. P. Ramsey, {\it On a problem in formal logic},
  Proc. London Math. Soc. (2), 30 (1930), 264-286.

%
%
%

\end{thebibliography}

\end{document}